\numberwithin{theorem}{section}
\crefname{remark}{Remark}{Remarks}
\title{A neuron-wise subspace correction method for \\the finite neuron method\thanks{
Submitted to arXiv.
\funding{This work was supported in part by the NRF grant funded by MSIT~(No.~2021R1C1C2095193), and in part by the KAUST Baseline Research Fund.}
}}
\author{Jongho Park\thanks{Applied Mathematics and Computational Sciences Program, CEMSE, King Abdullah University of Science and Technology~(KAUST), Thuwal 23955, Saudi Arabia
 (\email{jongho.park@kaust.edu.sa}, \email{xu@multigrid.org}, \email{xiaofeng.xu@kaust.edu.sa}).}
\and
Jinchao Xu\footnotemark[2] \thanks{Department of Mathematics, Pennsylvania State University, University Park, PA 16802, USA}.
 \and
 Xiaofeng Xu\footnotemark[2]
 }
\begin{document}

\maketitle

\begin{abstract}
In this paper, we propose a novel algorithm called Neuron-wise Parallel Subspace Correction Method~(NPSC) for the finite neuron method that approximates numerical solutions of partial differential equations~(PDEs) using neural network functions.
Despite extremely extensive research activities in applying neural networks for numerical PDEs, there is still a serious lack of effective training algorithms that can achieve adequate accuracy, even for one-dimensional problems.
Based on recent results on the spectral properties of linear layers and landscape analysis for single neuron problems, we develop a special type of subspace correction method that optimizes the linear layer and each neuron in the nonlinear layer separately.
An optimal preconditioner that resolves the ill-conditioning of the linear layer is presented for one-dimensional problems, so that the linear layer is trained in a uniform number of iterations with respect to the number of neurons.
In each single neuron problem, a good local minimum that avoids flat energy regions is found by a superlinearly convergent algorithm.
Numerical experiments on function approximation problems and PDEs demonstrate better performance of the proposed method than other gradient-based methods.
\end{abstract}

\begin{keywords}
Finite neuron method, Subspace correction method, Training algorithm, Preconditioner, Function approximation, Partial differential equation
\end{keywords}

\begin{AMS}
65D15, 65N22, 65N30, 65N55, 68T07
\end{AMS}

\section{Introduction}
\label{Sec:Introduction}
Neural networks, thanks to the universal approximation property~\cite{Cybenko:1989,Pinkus:1999}, are promising tools for numerical solutions of partial differential equations~(PDEs). 
Moreover, it was shown in~\cite{SX:2022b} that the approximation properties of neural networks have higher asymptotic approximation rates than that of traditional numerical methods such as the finite element method.
Such powerful approximation properties, however, can hardly be observed in numerical experiments in simple tasks of function approximation despite extensive research on numerical solutions of PDEs in recent years, e.g., physics-informed neural networks~\cite{RPK:2019}, the deep Ritz method~\cite{EY:2018}, and the finite neuron method~\cite{Xu:2020}.
Even in one dimension, using gradient-based methods to training a shallow neural network does not produce accurate solutions with a substantial number of iterations. This poor convergence behavior was analyzed rigorously in~\cite{HSTX:2022} and is due to the ill-conditioning of problem.
Therefore, applying neural networks to solutions of PDEs must require novel training algorithms, different from the conventional ones for regression, image classification, and pattern recognition tasks.
In this viewpoint, there are many works on designing and analyzing training algorithms for neural networks to try to speed up the convergence or narrow the gap between the theoretical optimum and training results.
For instance, a hybrid least squares/gradient descent method was proposed in~\cite{CGPPT:2020} from an adaptive basis perspective. 
The active neuron least squares method in~\cite{AS:2022} was designed to avoid plateau phenomena that slow down the gradient dynamics of training ReLU shallow neural networks~\cite{AS:2021,PAF:2000}.
In addition, as a completely different approach, the orthogonal greedy algorithm was shown to achieve an optimal convergence rate~\cite{SX:2022a,SX:2022c}.

The aim of this paper is to develop a novel training algorithm using several recent theoretical results on training of neural networks.
A surprising recent result on training of neural networks shows that optimizing the linear layer parameters in a neural network is one bottleneck that leads to a large number of iterations of a gradient-based method.
More precisely, it was proven in~\cite{HSTX:2022} that optimizing the linear layer parameters in a ReLU shallow neural network requires solving a very ill-conditioned linear problem in general.
This work motivates us to separately design efficient solvers for the outer linear layer and the inner nonlinear layer, respectively, and train them alternately. 

Meanwhile, some recent works suggest that learning a single neuron may be a more hopeful task than learning a nonlinear layer with multiple neurons.
In~\cite{Soltanolkotabi:2017,Tian:2017,YS:2020}, convergence analyses of gradient methods for the single neuron problem with ReLU activation were presented under various assumptions on input distributions. In~\cite{VYS:2021}, the case of a single ReLU neuron with bias was analyzed. All of these results show that the global convergence of gradient methods for the single ReLU neuron problem can be attained under certain conditions.

Inspired by the above results, we consider training of the finite neuron method in one dimension as an example, and use the well-known framework of subspace correction~\cite{Xu:1992} to combine insights from the spectral properties of linear layers~\cite{HSTX:2022} and landscape analysis for single neuron problems~\cite{VYS:2021}.
Subspace correction methods provide a unified framework to design and analyze many modern iterative numerical methods such as block coordinate descent, multigrid, and domain decomposition methods.
Mathematical theory of subspace correction methods for convex optimization problems is established in~\cite{Park:2020,TX:2002}, and successful applications of it to various nonlinear optimization problems in engineering fields can be found in, e.g.,~\cite{BK:2012,LP:2019}. 
In particular, there have been successful applications of block coordinate descent methods~\cite{Wright:2015} for training of neural networks~\cite{ZLLY:2019,ZB:2017}.
Therefore, we expect that the idea of the subspace correction method is also suitable for training of neural networks for the finite neuron method.

We propose a new training algorithm called Neuron-wise Parallel Subspace Correction Method~(NPSC), which is a special type of subspace correction method for the finite neuron method~\cite{Xu:2020}.
The proposed method utilizes a space decomposition for the linear layer and each individual neuron.
In the first step of each epoch of the NPSC, the linear layer is fully trained by solving a linear system using an iterative solver.
We prove, both theoretically and numerically, that we can design an optimal preconditioner for the linear layer for one-dimensional problems, based on the relation between ReLU neural networks and linear finite elements investigated in~\cite{HLXZ:2020,HSTX:2022}.
In the second step, we train each single neuron in parallel, taking advantages of better convergence properties of learning a single neuron and a superlinearly convergent algorithm~\cite{Marquardt:1963}.
Finally, an update for the parameters in the nonlinear layer is computed by assembling the corrections obtained in the local problems for each neuron.
Due to the intrinsic parallel structure, NPSC is suitable for parallel computation on distributed memory computers. 
We present applications of NPSC to various function approximation problems and PDEs, and numerically verify that it outperforms conventional training algorithms.

The rest of this paper is organized as follows.
In \cref{Sec:FNM}, we summarize key features of the finite neuron method with ReLU shallow neural networks, and state a model problem.
An optimal preconditioner for the linear layer in one dimension is presented in \cref{Sec:Preconditioner}.
NPSC, our proposed algorithm, is presented in \cref{Sec:NPSC}.
Applications of NPSC to various function approximation problems and PDEs are presented in \cref{Sec:Numerical} to demonstrate the effectiveness of NPSC.
We conclude the paper with remarks in \cref{Sec:Conclusion}.

\section{Finite neuron method}
\label{Sec:FNM}
In this section, we introduce the finite neuron method~\cite{Xu:2020} with ReLU shallow neural networks to approximate solutions of PDEs.
We also discuss the ill-conditioning of ReLU shallow neural network~\cite{HSTX:2022}, which reveals the difficulty of training ReLU shallow neural network using gradient-based methods.

\subsection{Model problem} 
We consider the following model problem:
 \begin{equation}
 \label{model_cont}
 \min_{u \in V} \left\{ \frac{1}{2} a(u, u) - \int_{\Omega} f u \,dx \right\},
 \end{equation}
where $\Omega \subset \mathbb{R}^d$ is a bounded domain, $f \in L^2 (\Omega)$, $a(\cdot, \cdot)$ is a continuous, coercive, and symmetric bilinear form defined on a Hilbert space $V \subset L^2 (\Omega)$.
Various elliptic boundary value problems can be formulated as optimization problems of the form~\eqref{model_cont}~\cite{Xu:2020}.

A ReLU shallow neural network with $n$ neurons is given by
\begin{equation}
\label{NN}
u(x; \theta) = \sum_{i=1}^n a_i \sigma (\omega_i \cdot x + b_i), \quad
\theta = \left\{ a, \omega, b \right\} = \left\{ ( a_i )_{i=1}^n, ( \omega_i )_{i=1}^n, ( b_i )_{i=1}^n \right\},
\end{equation}
where $x \in \mathbb{R}^d$ is an input, $\theta$ is the collection of parameters consisting of $a_i \in \mathbb{R}$, $\omega_i \in \mathbb{R}^d$, and $b_i \in \mathbb{R}$ for $1 \leq i \leq n$, and $\sigma \colon \mathbb{R} \rightarrow \mathbb{R}$ is the ReLU activation function defined by $\sigma (x) = \max \{ 0,x \}$.
The neural network~\eqref{NN} possesses a total of $(d+2)n$ parameters.

The collection of all neural network functions of the form~\eqref{NN} is denoted by $\Sigma_n$, i.e.,
\begin{equation*}
\Sigma_n = \left\{ v(x) = \sum_{i=1}^n a_i \sigma (\omega_i \cdot x + b_i) : a_i \in \mathbb{R}, \hspace{0.1cm} \omega_i \in \mathbb{R}^d, \hspace{0.1cm} b_i \in \mathbb{R}\right\}.
\end{equation*}
The space $\cup_{n \in \mathbb{N}}\Sigma_n$ enjoys the universal approximation property~\cite{Cybenko:1989,Pinkus:1999}, namely, any function with sufficient regularity can be uniformly approximated by functions in $\cup_{n \in \mathbb{N}}\Sigma_n$.
Recent results on such approximation properties can be found in~\cite{SX:2022b,SX:2022c}.

In the finite neuron method, we consider the Galerkin approximation of~\eqref{model_cont} on the space $\Sigma_n$:
\begin{equation*}
    \min_{u \in \Sigma_n} \left\{ \frac{1}{2} a(u,u) - \int_{\Omega} fu \,dx \right\},
\end{equation*}
which is equivalent to
\begin{equation}
 \label{model}
 \min_{\theta \in \Theta} \left\{ E(\theta) := \frac{1}{2} a \left( u(x; \theta), u(x; \theta) \right) - \int_{\Omega} f(x) u(x; \theta) \,dx \right\},
\end{equation}
where $\Theta = \mathbb{R}^{(d+2)n}$ is the parameter space.
That is, in~\eqref{model}, we find a good approximation of the solution of the continuous problem~\eqref{model_cont} in terms of the neural network~\eqref{NN}.

\subsection{Ill-conditioning of the linear layer}
From the structure of the neural network~\eqref{NN}, we say that the parameter $a$ belongs to the linear layer and $\omega$ and $b$ belong to the nonlinear layer.
In this subsection, we explain that even optimizing parameters in the linear layer becomes a bottleneck if we use simple gradient-based methods. A large number of iterations is needed to achieve satisfatory accuracy due to the ill-conditioning of the linear layer. 

\begin{figure}
  \centering
  \includegraphics[width=0.69\hsize]{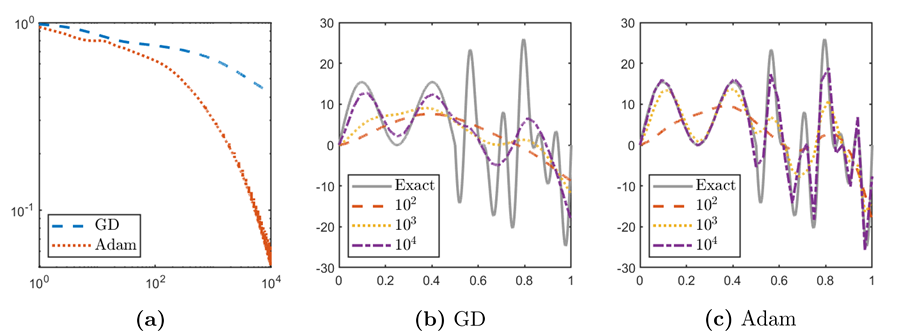}
  \caption{Numerical results for the function approximation problem~\eqref{M}. 
  {\rm\textbf{(a)}} Decay of the relative energy error $\frac{E_M(a^{(k)}) - E_M ( M^{-1}\beta)}{|E_M (M^{-1} \beta)|}$ in the gradient descent method~(GD) and Adam, where $k$ denotes the number of iterations.
  {\rm\textbf{(b, c)}} Exact solution and its approximations generated by various numbers of iterations of GD and Adam~($n = 2^5$).}
  \label{Fig:M}
\end{figure}

To illustrate the ill-conditioning, we consider  $\Omega = (0,1) \subset \mathbb{R}$ and the following bilinear form
\begin{equation}
    \label{L2}
    a(u,v) = \int_{\Omega} uv \,dx, \quad u,v \in V, \quad V= L^2 (\Omega)
\end{equation}
in~\eqref{model}.
We obtain the $L^2$-function approximation problem, which is the most elementary instance of~\eqref{model}.
We fix the parameters $\omega$ and $b$ in~\eqref{model} as follows:
\begin{equation*}
    \omega_i = 1, \quad
    b_i = - \frac{i-1}{n}, \quad
    1 \leq i \leq n.
\end{equation*}
Then~\eqref{model} is written as
\begin{equation}
    \label{M}
    \min_{a \in \mathbb{R}^n} \left\{ E_M (a) := \frac{1}{2} a^{\mathrm{T}} M a - \beta^{\mathrm{T}} a \right\},
\end{equation}
where $M \in \mathbb{R}^{n \times n}$ and $\beta \in \mathbb{R}^n$ are given by
\begin{equation*}
    \begin{aligned}
    &M_{ij} = \int_{\Omega} \sigma \left( x - \frac{j-1}{n} \right) \sigma \left( x - \frac{i-1}{n} \right) \,dx, \\
    &\beta_i = \int_{\Omega} f(x) \sigma \left( x - \frac{i-1}{n} \right) \,dx,
    \end{aligned}
    \quad 1 \leq i, j \leq n.
\end{equation*}

When we solve the minimization problem~\eqref{M} by a gradient-based method such as the gradient descent method or Adam~\cite{KB:2015}, the convergence rate depends on the condition number $\kappa (M)$ of $M$. It was recently proved in~\cite[Theorem 1]{HSTX:2022} that $M$ is very ill-conditioned. This result is stated in the following proposition. 

\begin{proposition}
\label{Prop:M}
In~\eqref{M}, the condition number $\kappa (M)$ of the matrix $M$ satisfies
\begin{equation*}
    \kappa (M) = O(n^4).
\end{equation*}
\end{proposition}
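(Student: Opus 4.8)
The matrix $M$ is the Gram matrix, in the $L^2(\Omega)$ inner product, of the ReLU functions $\phi_i(x) = \sigma(x - x_i)$ with $x_i = (i-1)/n$ and $h = 1/n$, $1 \le i \le n$. Hence $\lambda_{\max}(M)$ and $\lambda_{\min}(M)$ are exactly the optimal constants in
\[
\lambda_{\min}(M)\,\|a\|_2^2 \;\le\; \Big\| v \Big\|_{L^2(\Omega)}^2 \;\le\; \lambda_{\max}(M)\,\|a\|_2^2 , \qquad v := \sum_{i=1}^n a_i \phi_i , \quad a \in \mathbb{R}^n ,
\]
so it suffices to estimate $\|v\|_{L^2(\Omega)}^2$ from above and below in terms of $\|a\|_2^2$, and then $\kappa(M) = \lambda_{\max}(M)/\lambda_{\min}(M)$. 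The plan is to use the identification, exploited in~\cite{HLXZ:2020,HSTX:2022}, of $\mathrm{span}\{\phi_i\}$ with the space of continuous piecewise linear functions on the uniform mesh $0 = x_1 < x_2 < \dots < x_n < x_{n+1} = 1$ that vanish at $x = 0$, together with the well-conditioning of the $P_1$ finite element mass matrix on that mesh.

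\textbf{Upper eigenvalue (easy).} Since $\|\phi_i\|_{L^2(\Omega)}^2 = (1-x_i)^3/3 \le 1/3$, the triangle inequality gives $\|v\|_{L^2(\Omega)} \le \sum_i |a_i|\,\|\phi_i\|_{L^2(\Omega)} \le \tfrac{1}{\sqrt 3}\|a\|_1 \le \sqrt{n/3}\,\|a\|_2$, so $\lambda_{\max}(M) \le n/3 = O(n)$. (Testing with $a = (1,\dots,1)^{\mathrm T}$ shows this is sharp.)

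\textbf{Lower eigenvalue (the crux).} Let $v_k = v(x_k)$, $k = 1, \dots, n+1$; since every $\phi_i$ vanishes at $0$ we have $v_1 = 0$. On $(x_k, x_{k+1})$ the function $v$ is affine with slope $c_k$, and a direct computation from $v = \sum_i a_i \sigma(\,\cdot - x_i)$ gives $c_k = a_1 + \dots + a_k = (v_{k+1} - v_k)/h$, hence $a_k = c_k - c_{k-1}$ with $c_0 := 0$. Thus the coefficient vector $a$ is obtained from the nodal vector $(v_1, \dots, v_{n+1})$ by a scaled discrete second-difference operator whose rows have at most three nonzero entries drawn from $\{h^{-1}, -2h^{-1}, h^{-1}\}$; its spectral norm is therefore at most $4h^{-1} = 4n$, giving $\|a\|_2^2 \le 16 n^2 \sum_{k=1}^{n+1} v_k^2$. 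On the other hand, the elementwise identity $\|v\|_{L^2(x_k, x_{k+1})}^2 = \tfrac{h}{3}(v_k^2 + v_k v_{k+1} + v_{k+1}^2) \ge \tfrac{h}{6}(v_k^2 + v_{k+1}^2)$, summed over $k = 1, \dots, n$, yields $\sum_{k=1}^{n+1} v_k^2 \le 6 n\,\|v\|_{L^2(\Omega)}^2$ — this is exactly the lower spectral bound for the $P_1$ mass matrix. Combining the two estimates, $\|a\|_2^2 \le 96\, n^3\,\|v\|_{L^2(\Omega)}^2$, i.e. $\lambda_{\min}(M) \ge c\,n^{-3}$. Together with the upper bound, $\kappa(M) = \lambda_{\max}(M)/\lambda_{\min}(M) = O(n^4)$.

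\textbf{Main obstacle and sharpness.} The nontrivial step is the lower bound on $\lambda_{\min}(M)$: one must turn the purely algebraic relation between the coefficients $a_i$ and the finite element nodal values $v_k$ into an honest $L^2$ bound, which is precisely where the boundedness of the discrete second-difference operator and the norm equivalence for the $P_1$ mass matrix enter. Some care is needed at the two ends of the mesh — the leftmost function $\phi_1$ is a full ramp on all of $(0,1)$ and there is no breakpoint at $x = 1$, so the second-difference relation and the mass-matrix estimate must be written with the Dirichlet-type condition $v_1 = 0$ and with the half-interval $(x_n, 1)$ handled separately — but these are rank-$O(1)$ modifications that do not affect the $n$-scaling. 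Finally, one checks that the two bounds are sharp: $a = (1,\dots,1)^{\mathrm T}$ gives $\lambda_{\max}(M) = \Theta(n)$, and a highly oscillatory choice of $a$ (for which $\|v\|_{L^2(\Omega)}^2$ is of order $n^{-3}\|a\|_2^2$) gives $\lambda_{\min}(M) = \Theta(n^{-3})$, so in fact $\kappa(M) = \Theta(n^4)$ and the ill-conditioning is genuine.
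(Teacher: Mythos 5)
The paper itself does not prove Proposition~\ref{Prop:M}; it simply cites~\cite[Theorem~1]{HSTX:2022} and moves on. So there is no ``paper's own proof'' to compare against directly. That said, your argument is correct, and it uses exactly the mechanism that the paper later exploits to \emph{fix} the ill-conditioning in \cref{Sec:Preconditioner}: the identification of $\operatorname{span}\{\sigma(\cdot-x_i)\}$ with continuous piecewise linear finite elements on the grid $\{x_i\}$, the change of variables from ReLU coefficients to nodal values (your $a_k = h^{-1}(v_{k-1}-2v_k+v_{k+1})$ is, in matrix form, precisely the inverse of the $B$-type change-of-basis used to build the preconditioner $P$), and the uniform spectral equivalence $\overline{K}_\phi \sim h\,I$ for the $P_1$ mass matrix. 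In effect you have decomposed $M = T^{-\mathrm{T}} \overline{K}_\phi T^{-1}$ (with $T$ the scaled second-difference map) and read off $\kappa(M) \le \kappa(T^{\mathrm{T}}T)\,\kappa(\overline{K}_\phi) = O(n^4)\cdot O(1)$, which is the same structural reason the paper's preconditioner works.

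Two small points worth making explicit, though they do not affect the conclusion. First, in passing from $\sum_{k=1}^n (v_k^2+v_{k+1}^2)$ to $\sum_{k=1}^{n+1} v_k^2$ you implicitly use $v_1=0$ (otherwise the endpoint $v_1^2$ appears only once and the inequality picks up a harmless factor); stating this keeps the boundary bookkeeping honest. Second, for the spectral-norm bound on $T$, the cleanest justification is $\|T\|_2 \le \sqrt{\|T\|_1\|T\|_\infty} \le 4/h$, since $T$ is rectangular and nonsymmetric; invoking the eigenvalues of the symmetric second-difference matrix would require an extra word. Your sharpness remark is also right: $a=(1,\dots,1)^{\mathrm T}$ saturates $\lambda_{\max}(M)=\Theta(n)$, and a sign-alternating $a$ makes $v$ a sawtooth with $\|v\|_{L^2}^2 = \Theta(n^{-3})\|a\|_2^2$, so $\lambda_{\min}(M)=\Theta(n^{-3})$ and the $O(n^4)$ bound is tight, matching the qualitative message of \cref{Prop:M}.
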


The condition number $\kappa(M)$ becomes exceedingly large when $n$ increases. The convergence of simple gradient-based methods thus becomes extremely slow. We now demonstrate the poor convergence of the gradient descent method and Adam for solving~\eqref{M} with $f$ given by
\begin{equation}
\label{Ex2_pre}
f(x) = \begin{cases}
10 \left( \sin 2\pi x + \sin 6 \pi x \right), & \quad \textrm{ if } 0 < x < \frac{1}{2}, \\
10 \left( \sin 8\pi x + \sin 18 \pi x + \sin 26 \pi x \right), & \quad \textrm{ if } \frac{1}{2} \leq x < 1.
\end{cases}
\end{equation}
In both methods, the learning rate $\tau$ is chosen as $\tau = 2/(\lambda_{\min} (M) + \lambda_{\max} (M))$, which is optimal in the sense of~\cite[Lemma~C.5]{TW:2005}; $\lambda_{\min}$ and $\lambda_{\max}$ stand for the minimum and maximum eigenvalues, respectively.
We use the zero initial guess.
One can observe in \cref{Fig:M}(a) that the convergence rates of both methods are fairly slow even when $n$ is not large; the relative energy error $\frac{E_M(a^{(k)}) - E_M ( M^{-1}\beta)}{|E_M (M^{-1} \beta)|}$ does not reach $10^{-2}$ with $10^4$ iterations.
Moreover, as shown in \cref{Fig:M}(b, c), both methods give poor numerical approximations to $f$ despite large numbers of iterations.

\section{Optimal preconditioner for the linear layer in one dimension}
\label{Sec:Preconditioner}
In this section, we propose an optimal preconditioner for the linear layer in one dimension.
Using the proposed preconditioner, the linear layer can be fully trained within a uniform number of iterations with respect to the number of neurons $n$. 

For simplicity, we set $\Omega = (0, 1) \subset \mathbb{R}$ in~\eqref{model}.
Fixing the parameters $\omega$ and $b$ in the nonlinear layer, \eqref{model} reduces to the following minimization problem with respect to $a$:
\begin{equation}
\label{a_min}
    \min_{a \in \mathbb{R}^n} \left\{ \frac{1}{2} a^{\mathrm{T}} K a - \beta^{\mathrm{T}} a \right\},
\end{equation}
where $K \in \mathbb{R}^{n \times n}$ and $\beta \in \mathbb{R}^n$ are given by
\begin{equation}\label{stiffness_relu}
\begin{aligned}
&K_{ij} = a \left( \sigma (\omega_j x + b_j), \sigma (\omega_i x + b_i) \right) , \\
&\beta_i = \int_{\Omega} f(x) \sigma ( \omega_i x + b_i ) \,dx,
\end{aligned}
\quad 1 \leq i, j \leq n.
\end{equation}
Let
\begin{equation*}
x_i = -\frac{b_i}{\omega_i}, \quad 1 \leq i \leq n,
\end{equation*}
be the nodal point determined by the ReLU function $\psi_i (x) = \sigma (\omega_i x + b_i )$.
We denote the number of the nodal points inside $\Omega$ by $n_{\Omega}$.
Without loss of generality, we assume the following:
\begin{subequations}
\label{nodes}
\begin{align}
\label{nodes1}
& \psi_i \not\equiv 0 \text{ on }\Omega, \quad 1 \leq i \leq n, \\
\label{nodes2}
&n_{\Omega} \geq n - 2, \\
\label{nodes3}
&x_1, \dots, x_{n_{\Omega}} \in \Omega \hspace{0.1cm}\text{ and }\hspace{0.1cm} x_1 < \dots < x_{n_{\Omega}}, \\
\label{nodes4}
&x_{n_{\Omega + 1}}, \dots, x_n \in \mathbb{R} \setminus \Omega \hspace{0.1cm}\text{ and }\hspace{0.1cm} x_{n_{\Omega + 1}} < \dots < x_{n}.
\end{align}
\end{subequations}
The assumption~\eqref{nodes1} requires that no $\psi_i$ vanishes on $\Omega$ because otherwise they do not contribute to the minimization problem~\eqref{a_min}. The assumption~\eqref{nodes2} requires at most two neurons with nodal points outside $\Omega$. This is because the corresponding ReLU functions become linearly dependent on $\Omega$ if there are more than two neurons with nodal points outside $\Omega$. Finally, the assumptions~\eqref{nodes3} and~\eqref{nodes4} can be satisfied under an appropriate reordering.

Under~\eqref{nodes}, it is proved in \cite[Theorem~2.1]{HLXZ:2020} that $\{ \psi_i \}_{i=1}^n$ are linearly independent on $\Omega$. It is easy to see $K$ is symmetric and positive definite~(SPD).
Let $\mathcal{A} \colon V \rightarrow V$ be a bijective linear operator such that
\begin{equation}
    \label{cA}
    a(u,v) = \int_{\Omega} (\mathcal{A} u) v \,dx, \quad u,v \in V.
\end{equation}
Writing $\Psi = [\psi_1, \dots, \psi_n ]^{\mathrm{T}}$, we have
\begin{equation*}
    K = \int_{\Omega} (\mathcal{A} \Psi (x)) \Psi (x)^{\mathrm{T}} \,dx,
\end{equation*}
where $\mathcal{A}$ is applied entrywise.

Before we present the optimal preconditioner, we state the following lemma that is used in the construction of the proposed preconditioner.

\begin{lemma}
\label{Lem:preconditioner}
For two positive integers $m \geq n$, let $A$, $B \in \mathbb{R}^{m \times m}$ be two SPD matrices and let $R \in \mathbb{R}^{n \times m}$ be a surjective matrix.
Then $R B^{-1} R^{\mathrm{T}}$ is SPD and
\begin{equation*}
    \kappa \left( \left(R B^{-1} R^{\mathrm{T}} \right)^{-1} R A R^{\mathrm{T}} \right) \leq \kappa (B A).
\end{equation*}
\end{lemma}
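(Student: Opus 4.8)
The plan is to reduce the statement to a fact about generalized Rayleigh quotients restricted to a subspace of $\mathbb{R}^m$. First I would dispose of the positivity claims: since $R$ is surjective, $R^{\mathrm{T}}$ is injective, so $x^{\mathrm{T}} R B^{-1} R^{\mathrm{T}} x = (R^{\mathrm{T}} x)^{\mathrm{T}} B^{-1} (R^{\mathrm{T}} x) > 0$ for every $x \neq 0$ because $B^{-1}$ is SPD; hence $C := R B^{-1} R^{\mathrm{T}}$ is SPD, and the identical argument shows $R A R^{\mathrm{T}}$ is SPD. Therefore $C$ has an SPD square root $C^{1/2}$, and $(R B^{-1} R^{\mathrm{T}})^{-1} R A R^{\mathrm{T}} = C^{-1} R A R^{\mathrm{T}}$ is similar (via $C^{-1/2}$) to the SPD matrix $C^{-1/2} R A R^{\mathrm{T}} C^{-1/2}$. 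Its spectrum is thus real and positive, and its condition number equals the ratio of the largest to the smallest value of the generalized Rayleigh quotient
\[
\rho(y) := \frac{y^{\mathrm{T}} R A R^{\mathrm{T}} y}{y^{\mathrm{T}} R B^{-1} R^{\mathrm{T}} y}, \qquad 0 \neq y \in \mathbb{R}^n.
\]

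The next step is a change of variables that moves the problem into $\mathbb{R}^m$. Writing $z = R^{\mathrm{T}} y$, the map $y \mapsto z$ is a linear isomorphism of $\mathbb{R}^n$ onto the $n$-dimensional subspace $W := \operatorname{Range}(R^{\mathrm{T}}) \subseteq \mathbb{R}^m$, and $\rho(y) = (z^{\mathrm{T}} A z)/(z^{\mathrm{T}} B^{-1} z)$ for $z \in W \setminus \{0\}$. A further substitution $w = B^{-1/2} z$ rewrites this quotient as $(w^{\mathrm{T}} S w)/(w^{\mathrm{T}} w)$ with $S := B^{1/2} A B^{1/2}$, where $w$ now ranges over the $n$-dimensional subspace $\widetilde{W} := B^{-1/2} W$. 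The point of these substitutions is that $S$ is SPD and similar to $BA$, so $S$ and $BA$ have the same eigenvalues and $\kappa(S) = \kappa(BA)$.

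Finally I would invoke the Courant--Fischer bounds: for any subspace $\widetilde{W} \subseteq \mathbb{R}^m$,
\[
\lambda_{\min}(S) \le \min_{0 \neq w \in \widetilde{W}} \frac{w^{\mathrm{T}} S w}{w^{\mathrm{T}} w} \le \max_{0 \neq w \in \widetilde{W}} \frac{w^{\mathrm{T}} S w}{w^{\mathrm{T}} w} \le \lambda_{\max}(S),
\]
whence the ratio of the maximum of $\rho$ to the minimum of $\rho$ is at most $\lambda_{\max}(S)/\lambda_{\min}(S) = \kappa(S) = \kappa(BA)$, which is the asserted inequality.

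I do not expect a genuine obstacle: the argument is a chain of standard reductions. The only steps needing care are bookkeeping ones --- checking that the non-symmetric products $BA$ and $C^{-1} R A R^{\mathrm{T}}$ are similar to SPD matrices so that ``$\kappa$'' unambiguously denotes the ratio of extreme (real, positive) eigenvalues, and using surjectivity of $R$ precisely where it is needed, namely to make $C$ invertible and to guarantee that $W = \operatorname{Range}(R^{\mathrm{T}})$ is a genuine $n$-dimensional subspace on which $\rho$ is represented.
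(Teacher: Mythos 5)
Your proposal is correct and follows essentially the same route as the paper: both establish positive definiteness of $RB^{-1}R^{\mathrm{T}}$ via injectivity of $R^{\mathrm{T}}$, then characterize the extreme eigenvalues of $(RB^{-1}R^{\mathrm{T}})^{-1}RAR^{\mathrm{T}}$ as extrema of the generalized Rayleigh quotient over $\operatorname{ran} R^{\mathrm{T}}$, and bound those by the unconstrained extrema, which equal $\lambda_{\min}(BA)$ and $\lambda_{\max}(BA)$. The only difference is that you spell out the intermediate similarity transforms ($C^{-1/2}$ and $B^{-1/2}$) that the paper leaves implicit; the core argument is identical.
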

\begin{proof}
    It is easy to see that $B^{-1}$ is SPD and $R^{\mathrm{T}}$ is injective.
    Then we have
\begin{equation*}
    \langle R B^{-1} R^{\mathrm{T}} \alpha, \alpha \rangle = \langle B^{-1} R^{\mathrm{T}} \alpha, R^\mathrm{T} \alpha \rangle > 0,\quad \forall \alpha \neq \mathbf{0}. 
\end{equation*}
Hence, $R B^{-1} R^T$ is also SPD.

One can establish a lower bound for the minimum eigenvalue of the matrix $(RB^{-1}R^{\mathrm{T}})^{-1} RAR^{\mathrm{T}}$ as follows:
\begin{multline}
\label{lambda_min}
    \lambda_{\min} \left( \left(R B^{-1} R^{\mathrm{T}} \right)^{-1} R A R^{\mathrm{T}} \right)
    = \min_{\alpha \neq \mathbf{0}} \frac{\alpha^{\mathrm{T}} R A R^{\mathrm{T}} \alpha}{\alpha^{\mathrm{T}} R B^{-1} R^{\mathrm{T}} \alpha}  \\
    = \min_{\substack{\bar{\alpha} \in \operatorname{ran} R^{\mathrm{T}}, \\ \bar{\alpha} \neq \mathbf{0}} } \frac{\bar{\alpha}^{\mathrm{T}} A \bar{\alpha}}{\bar{\alpha}^{\mathrm{T}} B^{-1} \bar{\alpha}} 
    \geq \min_{\bar{\alpha} \neq \mathbf{0} } \frac{\bar{\alpha}^{\mathrm{T}} A \bar{\alpha}}{\bar{\alpha}^{\mathrm{T}} B^{-1} \bar{\alpha}} = \lambda_{\min} (BA).
\end{multline}
In the same manner, we have
\begin{equation}
\label{lambda_max}
\lambda_{\max} \left( \left(R B^{-1} R^{\mathrm{T}} \right)^{-1} R A R^{\mathrm{T}} \right) \leq \lambda_{\max} (B A).
\end{equation}
Combining~\eqref{lambda_min} and~\eqref{lambda_max} yields the desired result.
\end{proof}

Now we are ready to construct our proposed preconditioner, which resolves the issue of ill-conditioning explained in \cref{Sec:FNM}.
We first outline the main idea for the construction of the preconditioner.
By suitably enriching the space spanned by the ReLU functions $ \{ \psi_i \}_{i=1}^n$, an alternative set of basis consisting of hat functions is available.
Applying a change of variables to these hat basis functions transforms the system matrix into a form amenable to direct solution with $\mathcal{O}(n)$ arithmetic operations.
Consequently, by combining this change of variables with a direct solver for the transformed matrix, we obtain the desired preconditioner.

We define
\begin{equation*}
\psi_{\mathrm{L}} (x) = x, \quad \psi_{\mathrm{R}} (x) = 1-x,
\end{equation*}
and write $\overline{\Psi} = [\psi_1 , \dots, \psi_{n_{\Omega}} , \psi_{\mathrm{L}}, \psi_{\mathrm{R}}]^{\mathrm{T}}$.
That is, $\overline{\Psi}$ is formed by augmenting the ReLU functions $\{ \psi_i \}_{i=1}^{n_{\Omega}}$ with additional ReLU functions whose nodes are the located at the endpoints of the domain $\Omega$.
Since each $\psi_i$, $n_{\Omega} + 1 \leq i \leq n$, is linear on $\Omega$, we have
\begin{equation*}
    \psi_i = \psi_i (1) \psi_{\mathrm{L}} + \psi_i (0) \psi_{\mathrm{R}} \hspace{0.1cm}\text{ on }\hspace{0.1cm} \Omega.
\end{equation*}
Hence, we readily get $\Psi = R \overline{\Psi}$ on $\Omega$, where
\begin{equation}
\label{R}
R = \begin{bmatrix}
I_{n_{\Omega}} & \mathbf{0} \\
\mathbf{0} & \widetilde{R} \end{bmatrix}
\in \mathbb{R}^{n \times (n_{\Omega} + 2)}, \quad
\widetilde{R} = \begin{bmatrix}
\psi_{n_{\Omega} + 1} (1) & \psi_{n_{\Omega} + 1} (0) \\
\vdots & \vdots \\
\psi_{n} (1) & \psi_{n} (0) \end{bmatrix}
\in \mathbb{R}^{(n - n_{\Omega}) \times 2}.
\end{equation}
In~\eqref{R}, $I_{n_{\Omega}}$ denotes the $\mathbb{R}^{n_{\Omega} \times n_{\Omega}}$ identity matrix.

We define
\begin{equation}
    \label{bM}
    \overline{K} = \int_{\Omega} (\mathcal{A} \overline{\Psi} (x)) \overline{\Psi} (x)^{\mathrm{T}} \,dx.
\end{equation}
It is easy to see $K = R \overline{K} R^{\mathrm{T}}$.
Invoking~\cite[Theorem~2.1]{HLXZ:2020} implies that the entries of $\overline{\Psi}$ are linearly independent, so that they form a basis for the space $V_{n_{\Omega} + 2}$ of continuous and piecewise linear functions on the grid $0 < x_1 < \dots < x_{n_{\Omega}} < 1$.

If we set $\overline{\Psi}^+ = [\psi_1^+, \dots, \psi_{n_{\Omega}}^+, \psi_{\mathrm{L}}, \psi_{\mathrm{R}}]^{\mathrm{T}}$, where
\begin{equation*}
    \psi_i^+ (x) = \sigma (x - x_i), \quad 1 \leq i \leq n_{\Omega},
\end{equation*}
then by direct calculation we get
\begin{equation}
\label{B1}
    \psi_i = \begin{cases}
     \omega_i  \psi_i^+, & \quad \text{ if } \omega_i \geq 0, \\
    \omega_i ( 1 - x_i) \psi_{\mathrm{L}} - \omega_i \psi_i^+ - \omega_i x_i \psi_{\mathrm{R}}, & \quad \text{ otherwise.}
    \end{cases}
\end{equation}
Note that $\overline{\Psi}^+$ is formed by replacing the slopes of the ReLU functions $\{ \psi_i \}_{i=1}^{n_{\Omega}}$ with $1$.
Using~\eqref{B1}, it is straightforward to construct a matrix $B_1 \in \mathbb{R}^{(n_{\Omega}+2) \times (n_{\Omega}+2)}$ satisfying
\begin{equation}
\label{B1_relation}
    \overline{\Psi} = B_1 \overline{\Psi}^+.
\end{equation}
The matrix $B_1$ is nonsingular since both the entries of $\overline{\Psi}$ and those of $\overline{\Psi}^+$ form bases for the space $V_{n_{\Omega + 2}}$~\cite{HLXZ:2020}.

Meanwhile, $V_{n_{\Omega} + 2}$ admits the standard hat basis $\{ \phi_1, \dots, \phi_{n_{\Omega}}, \phi_{\mathrm{L}}, \phi_{\mathrm{R}}  \}$, which is given by
\begin{align*}
&\phi_i (x) = \begin{cases}
\dfrac{x - x_{i-1}}{x_i - x_{i-1}}, &
\quad \text{ if } x \in (x_{i-1}, x_i], \\
\dfrac{x - x_{i+1}}{x_i - x_{i+1}}, &
\quad \text{ if } x \in [x_i, x_{i+1}), \\
0, & \quad \text{ otherwise,}
\end{cases}
 \quad 1 \leq i \leq n_{\Omega}, \\
 &\phi_{\mathrm{L}} (x) = \begin{cases}
 -\dfrac{x - x_1}{x_1}, &
 \quad \text{ if } x \in (0, x_1], \\
 0, & \quad \text{ otherwise,}
 \end{cases}
 \quad
 \phi_{\mathrm{R}} (x) = \begin{cases}
 \dfrac{x - x_{n_{\Omega}}}{1 - x_{n_{\Omega}}}, &
 \quad \text{ if } x \in [x_{n_{\Omega}}, 1), \\
 0, & \quad \text{ otherwise,}
 \end{cases}
\end{align*}
with abuse of notation $x_{-1} = 0$ and $x_{n_{\Omega} + 1} = 1$.
Similar to~\eqref{bM}, we write $\overline{\Phi} = [\phi_1, \dots, \phi_{n_{\Omega}}, \phi_{\mathrm{L}}, \phi_{\mathrm{R}}]^{\mathrm{T}}$ and set
\begin{equation*}
    \overline{K}_{\phi} = \int_{\Omega} (\mathcal{A} \overline{\Phi} (x)) \overline{\Phi} (x)^{\mathrm{T}} \,dx.
\end{equation*}
One can verify the following relation between the entries of $\overline{\Phi}$ and those of $\overline{\Psi}^+$ by direct calculation:
\begin{equation}
\label{B2}
\begin{split}
    \phi_1 &= \frac{1}{x_1} \psi_{\mathrm{L}} - \frac{x_2}{(x_2 - x_1) x_1} \psi_1^+ + \frac{1}{x_2 - x_1} \psi_2^+,\\
    \phi_i &= \frac{1}{x_i - x_{i-1}} \psi_{i-1}^+ - \frac{x_{i+1} - x_{i-1}}{(x_{i+1} - x_i)(x_i - x_{i-1})} \psi_i^+ 
    + \frac{1}{x_{i+1}-x_i} \psi_{i+1}^+ , \hspace{0.1cm} 2 \leq i \leq n_{\Omega}-1,\\
    \phi_{n_{\Omega}} &= \frac{1}{x_{n_{\Omega}} - x_{n_{\Omega}-1}} \psi_{n_{\Omega}-1}^+ - \frac{1 - x_{n_{\Omega}-1}}{(1-x_{n_{\Omega}})(x_{n_{\Omega}}-x_{n_{\Omega}-1})} \psi_{n_{\Omega}}^+,\\
    \phi_{\mathrm{L}} &= - \frac{1-x_1}{x_1} \psi_{\mathrm{L}} + \frac{1}{x_1} \psi_1^+ + \psi_{\mathrm{R}}, \\
    \phi_{\mathrm{R}} &= \frac{1}{1 - x_n} \psi_{n_{\Omega}}^+.
\end{split}
\end{equation}
Hence, we can construct a matrix $B_2 \in \mathbb{R}^{(n_{\Omega}+2) \times (n_{\Omega}+2)}$ such that
\begin{equation}
    \label{B2_relation}
    \overline{\Phi} = B_2 \overline{\Psi}^+
\end{equation}
explicitly using~\eqref{B2}.
Combining~\eqref{B1_relation} and~\eqref{B2_relation} yields
\begin{equation}
\label{C_relation}
    \overline{\Phi} = B \overline{\Psi},
\end{equation}
where $B = B_2 B_1^{-1}$.
\Cref{C_relation} implies that two matrices $\overline{K}$ and $\overline{K}_{\phi}$ are related as follows:
\begin{equation}
\label{K_relation}
    \overline{K} = \int_{\Omega} (\mathcal{A} \overline{\Psi} (x)) \overline{\Psi} (x)^{\mathrm{T}} \,dx
    = \int_{\Omega} B^{-1}( \mathcal{A} \overline{\Phi} (x)) \overline{\Phi} (x)^{\mathrm{T}} B^{-\mathrm{T}} \,dx
    = B^{-1} \overline{K}_{\phi} B^{-\mathrm{T}}.
\end{equation}
Since $K = R \overline{K} R^{\mathrm{T}}$, invoking \cref{Lem:preconditioner}, setting $\overline{P} = B^{\mathrm{T}} \overline{K}_{\phi}^{-1} B$ and
\begin{equation}
\label{P}
    P = \left( R \overline{P}^{-1} R^{\mathrm{T}} \right)^{-1}
\end{equation}
completes the construction of the proposed preconditioner, where $R$ was defined in~\eqref{R}.
We summarize our main result described above in the following theorem.

\begin{theorem}
\label{Thm:preconditioner}
Let $\Omega = (0,1) \subset \mathbb{R}$ and let $K \in \mathbb{R}^{n \times n}$ be the matrix defined in~\eqref{stiffness_relu}.
Assume that~\eqref{nodes} holds.
Then the preconditioner $P \in \mathbb{R}^{n \times n}$ given in~\eqref{P} satisfies
\begin{equation*}
    \kappa (P K) = O(1),
\end{equation*}
i.e., $\kappa (P K)$ has an upper bound independent of $n$, $\omega$, and $b$.
\end{theorem}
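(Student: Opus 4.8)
The plan is to run everything through the change of variables already assembled in \eqref{B1_relation}--\eqref{K_relation} and reduce the statement to a single application of \cref{Lem:preconditioner}, in which the right-hand bound $\kappa(BA)$ collapses to $1$. First I would record the spectral bookkeeping on the enriched space. Under \eqref{nodes}, \cite[Theorem~2.1]{HLXZ:2020} guarantees that the entries of $\overline{\Psi}$, of $\overline{\Psi}^+$, and of $\overline{\Phi}$ each form a basis of the continuous piecewise linear space $V_{n_{\Omega}+2}$. Combined with the coercivity and symmetry of $a(\cdot,\cdot)$, this makes $\overline{K}$ and $\overline{K}_{\phi}$ SPD, and it makes $B_1$, $B_2$, hence $B=B_2B_1^{-1}$, nonsingular, so that $\overline{P}=B^{\mathrm{T}}\overline{K}_{\phi}^{-1}B$ is SPD as well. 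The key algebraic fact is that the identity \eqref{K_relation}, $\overline{K}=B^{-1}\overline{K}_{\phi}B^{-\mathrm{T}}$, is exactly the statement $\overline{P}=\overline{K}^{-1}$; indeed $\overline{P}\,\overline{K}=B^{\mathrm{T}}\overline{K}_{\phi}^{-1}B\,B^{-1}\overline{K}_{\phi}B^{-\mathrm{T}}=I_{n_{\Omega}+2}$.

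Next I would verify the hypotheses of \cref{Lem:preconditioner} with $m=n_{\Omega}+2$, the given $n$, $A=\overline{K}$, $B=\overline{P}$, and $R$ as in \eqref{R}. The size condition $m\ge n$ is precisely assumption \eqref{nodes2}. Surjectivity of $R$ follows because, if $v^{\mathrm{T}}R=\mathbf{0}$, then $v^{\mathrm{T}}\Psi=v^{\mathrm{T}}R\overline{\Psi}\equiv 0$ on $\Omega$, and the linear independence of $\{\psi_i\}_{i=1}^{n}$ on $\Omega$ (again \cite[Theorem~2.1]{HLXZ:2020} under \eqref{nodes}) forces $v=\mathbf{0}$. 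Since $K=R\overline{K}R^{\mathrm{T}}$ and $P=(R\overline{P}^{-1}R^{\mathrm{T}})^{-1}$ by \eqref{P}, \cref{Lem:preconditioner} yields
\[
\kappa(PK)=\kappa\!\left(\bigl(R\overline{P}^{-1}R^{\mathrm{T}}\bigr)^{-1}R\overline{K}R^{\mathrm{T}}\right)\le\kappa(\overline{P}\,\overline{K})=\kappa(I_{n_{\Omega}+2})=1.
\]
On the other hand the eigenvalues of $PK$ are the generalized eigenvalues of the SPD pair $(R\overline{K}R^{\mathrm{T}},R\overline{P}^{-1}R^{\mathrm{T}})$ (the latter SPD by \cref{Lem:preconditioner}), hence positive, so $\kappa(PK)\ge 1$. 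Therefore $\kappa(PK)=1=O(1)$, with a bound depending on none of $n$, $\omega$, $b$.

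The displayed manipulations are routine matrix algebra, so the part deserving care is the two distinct roles of \eqref{nodes}: assumption \eqref{nodes2} is what makes the auxiliary space large enough ($m=n_{\Omega}+2\ge n$) for \cref{Lem:preconditioner} to apply at all, while \eqref{nodes1}--\eqref{nodes4}, through the linear-independence result of \cite{HLXZ:2020}, are what guarantee that $R$ is surjective and that $\overline{K}$, $\overline{K}_{\phi}$, and $\overline{P}$ are genuinely SPD. I would also stress in the write-up that the potentially ill-conditioned change-of-basis matrix $B$ never survives into the final estimate: it cancels completely in $\overline{P}\,\overline{K}=I$, and this cancellation---not any spectral-equivalence inequality---is the real mechanism behind the theorem. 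This is why it suffices to invoke \cref{Lem:preconditioner} with $\kappa(\overline{P}\,\overline{K})$ on the right-hand side, rather than to compare the ReLU-basis stiffness matrix and the hat-basis stiffness matrix directly.
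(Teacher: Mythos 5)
Your proof is correct and follows essentially the same route the paper takes: it reduces the claim to a single application of \cref{Lem:preconditioner} with $A=\overline{K}$, $B=\overline{P}$, using $K=R\overline{K}R^{\mathrm{T}}$, $\overline{P}=\overline{K}^{-1}$ (via \eqref{K_relation}), and the surjectivity of $R$ guaranteed by \eqref{nodes} and \cite[Theorem~2.1]{HLXZ:2020}. You also correctly note the sharper fact implicit in the construction, namely that $\overline{P}^{-1}=\overline{K}$ forces $P=K^{-1}$, so in exact arithmetic $\kappa(PK)=1$; the paper states only the weaker $O(1)$ bound, which is what survives when $\overline{K}_{\phi}^{-1}$ is replaced by a spectrally equivalent operator such as $\operatorname{diag}(\overline{K}_{\phi})^{-1}$ as in \cref{Rem:diagonal}.
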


The proposed preconditioner $P$ can be implemented very efficiently, requiring only $\mathcal{O} (n)$ elementary arithmetic operations.
In particular, solving a linear system represented by the matrix $\overline{K}_{\phi}$ can be accomplished with $\mathcal{O} (n)$ elementary arithmetic operations.
Details of the implementation of the proposed preconditioner are discussed in \cref{App:Implementation}.    

\begin{remark}
\label{Rem:diagonal}
In the case of the $L^2$-function approximation problem~\eqref{L2}, we can construct an alternative preconditioner $P_{\operatorname{diag}}$ whose computational cost is a bit cheaper than $P$.
Let $\overline{P}_{\operatorname{diag}} = B^{\mathrm{T}} \operatorname{diag} (\overline{K}_{\phi})^{-1} B$.
It follows from~\eqref{K_relation} that
\begin{equation*}
    \overline{P}_{\operatorname{diag}} \overline{K} = B^{\mathrm{T}} \operatorname{diag} (\overline{K}_{\phi})^{-1} B \overline{K} = B^{\mathrm{T}} \operatorname{diag} (\overline{K}_{\phi})^{-1} \overline{K}_{\phi} B^{-\mathrm{T}}.
\end{equation*}
In~\eqref{L2}, $\overline{K}_{\phi}$ is a mass matrix for the linear finite element method defined on the grid $0 < x_1 < \dots < x_{n_{\Omega}} < 1$, so that it satisfies $\kappa (\operatorname{diag} (\overline{K}_{\phi})^{-1} \overline{K}_{\phi}) = O(1)$.
Then we have
\begin{equation*}
    \kappa(\overline{P}_{\operatorname{diag}} \, \overline{K}) = \kappa(B^{\mathrm{T}} \operatorname{diag} (\overline{K}_{\phi})^{-1} \overline{K}_{\phi} B^{-\mathrm{T}}) = \kappa(\operatorname{diag} (\overline{K}_{\phi})^{-1} \overline{K}_{\phi}) = O(1).
\end{equation*}
Therefore, $P_{\operatorname{diag}} = ( R \overline{P}_{\operatorname{diag}}^{-1} R^{\mathrm{T}})^{-1}$ satisfies $\kappa (P_{\operatorname{diag}} K) = O(1)$ by \cref{Lem:preconditioner}.
It is evident that the computational cost of $\operatorname{diag} (\overline{K}_{\phi})^{-1}$ is cheaper than that of $\overline{K}_{\phi}^{-1}$.
\end{remark}

\section{Neuron-wise Parallel Subspace Correction Method~(NPSC)}
\label{Sec:NPSC}

\begin{figure}
  \centering
  \includegraphics[width=0.90\hsize]{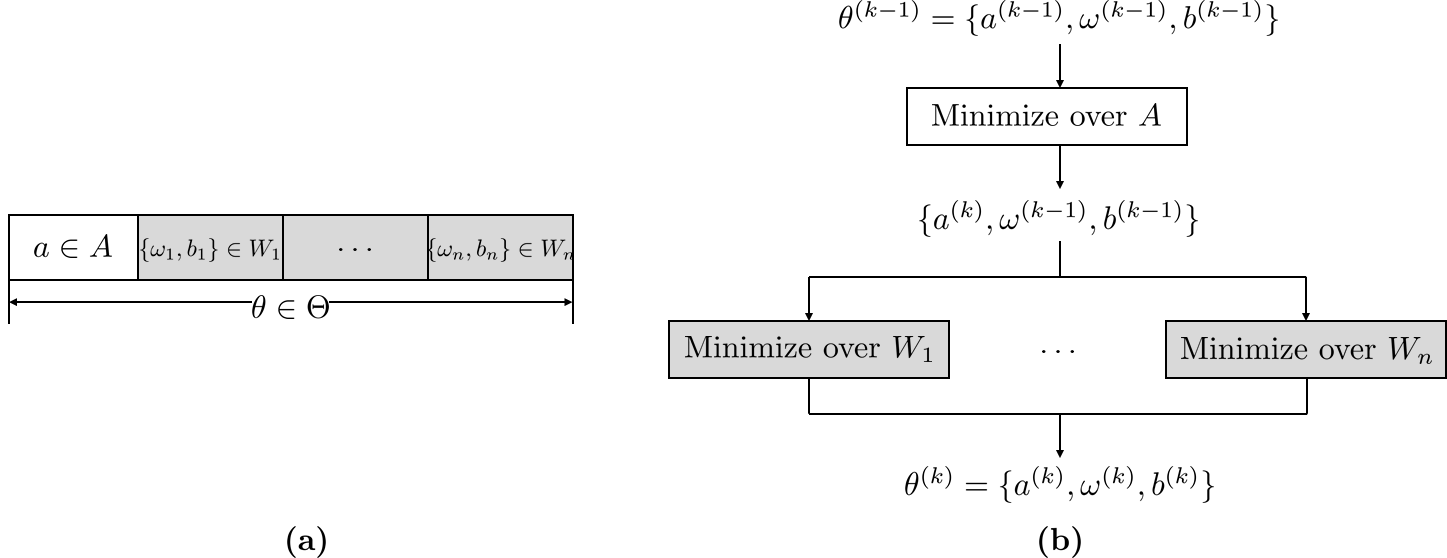}
  \caption{{\rm\textbf{(a)}} Space decomposition of the solution space $\Theta$ of~\eqref{model} into subspaces $A$ and $\{ W_i \}_{i=1}^n$.
  {\rm\textbf{(b)}} Subspace correction procedure of NPSC.}
  \label{Fig:space_decomp}
\end{figure}

In this section, we introduce NPSC, a subspace correction method~\cite{Xu:1992} that deals with the linear layer and each neuron in the nonlinear layer separately for solving~\eqref{model}.
Thanks to the preconditioner proposed in the previous section, the linear layer of the neural network~\eqref{NN} can be fully trained with a cheap computational cost. 

We first present a space decomposition for NPSC.
The parameter space $\Theta$ admits a natural decomposition $\Theta = A \oplus W$, where $A = \mathbb{R}^n$ and $W = \mathbb{R}^{(d+1)n}$ are the spaces for $a$ and $\{\omega, b\}$, respectively, and $\oplus$ denotes direct sum.
Since any $\{\omega, b\} \in W$ consists of the parameters $(\{ \omega_i, b_i \})_{i=1}^n$ from $n$ neurons, $W$ can be further decomposed as $W = \bigoplus_{i=1}^n W_i$, where $W_i = \mathbb{R}^{d+1}$ is the space for $\{\omega_i, b_i \}$.
Finally, we have the following space decomposition of $\Theta$:
\begin{equation}
\label{space_decomp}
\Theta = A \oplus \bigoplus_{i=1}^n W_i.
\end{equation}
A graphical description for the space decomposition~\eqref{space_decomp} is presented in \cref{Fig:space_decomp}(a).

\begin{algorithm}
\caption{Neuron-wise Parallel Subspace Correction Method~(NPSC) for~\eqref{model}}
\begin{algorithmic}[]
\label{Alg:NPSC}
\STATE Choose an initial guess $\theta^{(0)} = \{ a^{(0)}, \omega^{(0)}, b^{(0)} \}$ and an initial learning rate $\tau_0 = 1$.
\FOR{$k=0,\dots, T-1$}
\STATE Adjust $\{ \omega^{(k)}, b^{(k)} \}$ to avoid linear dependence of the neurons~(see \cref{Alg:adj}).
\STATE $\displaystyle
a^{(k+1)} \in \operatornamewithlimits{\arg\min}_{a \in A} E (\{ a, \omega^{(k)}, b^{(k)} \})
$~(see~\eqref{a_min_linear})
\FOR{$i = 1, \dots, n$ \textbf{in parallel}}
\STATE 
\begin{equation}
    \label{local}
\{ \omega_i^{(k+\frac{1}{2})}, b_i^{(k+\frac{1}{2})} \} \in \operatornamewithlimits{\arg\min}_{\{\omega_i, b_i \} \in W_i} E \left( \left\{ a^{(k+1)}, \omega_i \oplus \bigoplus_{j\neq i}  \omega_j^{(k)} , b_i \oplus \bigoplus_{j\neq i}  b_j^{(k)} \right\} \right)
\end{equation}
\ENDFOR
\STATE Determine the learning rate $\tau_k$ by backtracking~(see \cref{Alg:backt}).
\FOR{$i = 1, \dots, n$ \textbf{in parallel}}
\STATE $\displaystyle
\omega_i^{(k+1)} = (1- \tau_k) \omega_i^{(k)} + \tau_k \omega_i^{(k+\frac{1}{2})}
$
\STATE $\displaystyle
b_i^{(k+1)} = (1- \tau_k) b_i^{(k)} + \tau_k b_i^{(k+\frac{1}{2})}
$
\ENDFOR
\ENDFOR
\end{algorithmic}
\end{algorithm}

NPSC, our proposed method, is presented in \cref{Alg:NPSC}.
It is a subspace correction method~\cite{Xu:1992} for~\eqref{model} based on the space decomposition~\eqref{space_decomp}. At the $k$th epoch, NPSC updates the parameter $a$ first by minimizing the loss function with respect to $a$, then it updates the parameters in each neuron by minimizing $E$ with respect to $\{ \omega_i, b_i \}$ in parallel.
The update of $\{ \omega_i, b_i \}$ is relaxed by an appropriate learning rate $\tau_k > 0$ as in the existing parallel subspace correction methods for optimization problems~\cite{LP:2019,TX:2002}.
The overall structure of NPSC is depicted in \cref{Fig:space_decomp}(b).
The space decomposition allows flexible usage of solvers for each subproblems.
In the remainder of this section, we discuss different parts of \cref{Alg:NPSC} in detail.

\subsection{\texorpdfstring{$a$}{a}-minimization problem}
\label{Subsec:a}
As we discussed in \cref{Sec:Preconditioner}, the $a$-minimization problem in \cref{Alg:NPSC} can be represented as a quadratic optimization problem of the form~\eqref{a_min}.
Equivalently, it suffices to solve a system of linear equations
\begin{equation}
    \label{a_min_linear}
    K a = \beta,
\end{equation}
where $K$ and $\beta$ are defined in a similar manner as~\eqref{stiffness_relu}.
Since it is guaranteed by \cref{Alg:adj} that $K$ is SPD,~\eqref{a_min_linear} can be solved by the preconditioned conjugate gradient method~(see, e.g.,~\cite{Saad:2003}).
Equipped with the preconditioner proposed in \cref{Sec:Preconditioner}, the preconditioned conjugate gradient method always finds a solution of~\eqref{a_min_linear} up to a certain level of precision within a uniform number of iterations with respect to $n$, $\omega^{(k)}$, and $b^{(k)}$.

\subsection{\texorpdfstring{$\{ \omega_i, b_i \}$}{(omegai, bi)}-minimization problems}
\label{Subsec:Local}
Training the nonlinear layer presents challenges due to its nonconvexity and complex landscape.
In the following, we discuss the training process of the nonlinear layer in NPSC, where each neuron is trained separately.

We consider the $\{ \omega_i, b_i \}$-minimization problem~\eqref{local} in \cref{Alg:NPSC} for a fixed $i$.
We may assume that $a_i^{(k+1)} \neq 0$; otherwise, the minimization problem becomes trivial.
Under some elementary manipulations,~\eqref{local} is rewritten as
\begin{equation}
\label{local_general}
\min_{\{ \omega_i, b_i \} \in \mathbb{R}^{d+1}} \left\{ E_{i} (\omega_i, b_i) := \frac{1}{2} a \left( \psi_i, \psi_i \right) - \int_{\Omega} F \psi_i \,dx \right\},
\end{equation}
where $\psi_i (x) = \sigma (\omega_i \cdot x + b_i)$ and $F \in L^2 (\Omega)$ is a function determined in terms of $f$, $a^{(k+1)}$, $\omega_j^{(k)}$, and $b_j^{(k)}$ for $j \neq i$.
Indeed, we have
\begin{equation*} \begin{split}
E &\left( \left\{ a^{(k+1)}, \omega_i \oplus \bigoplus_{j\neq i}  \omega_j^{(k)} , b_i \oplus \bigoplus_{j\neq i}  b_j^{(k)} \right\} \right) \\
&= \frac{1}{2} a \left( a_i^{(k+1)} \psi_i + \sum_{j \neq i} a_j^{(k+1)} \psi_j^{(k)}, a_i^{(k+1)} \psi_i + \sum_{j \neq i} a_j^{(k+1)} \psi_j^{(k)} \right) \\
&\quad- \int_{\Omega} f \left( a_i^{(k+1)} \psi_i + \sum_{j \neq i} a_j^{(k+1)} \psi_j^{(k)} \right) \,dx \\
&= (a_i^{(k+1)})^2 \left( \frac{1}{2} a(\psi_i, \psi_i) + \frac{1}{a_i^{(k+1)}} a \left(  \sum_{j \neq i} a_j^{(k+1)} \psi_j^{(k)}, \psi_i \right) - \frac{1}{a_i^{(k+1)}} \int_{\Omega} f \psi_i \,dx \right) + \text{constant} \\
&\stackrel{\eqref{cA}}{=} (a_i^{(k+1)})^2 \left( \frac{1}{2} a(\psi_i, \psi_i) - \frac{1}{a_i^{(k+1)}} \int_{\Omega} \left( f - \mathcal{A} \left(  \sum_{j \neq i} a_j^{(k+1)} \psi_j^{(k)} \right) \right) \psi_i \,dx \right) + \text{constant},
\end{split} \end{equation*}
which proves~\eqref{local_general} with
\begin{equation*}
    F = \frac{1}{a_i^{(k+1)}} \left( f - \mathcal{A} \left(  \sum_{j \neq i} a_j^{(k+1)} \psi_j^{(k)} \right) \right),
\end{equation*}
 where $\psi_j^{(k)}(x) = \sigma (\omega_j^{(k)} \cdot x + b_j^{(k)})$ for $j \neq i$.

Given that single neuron training~\eqref{local_general} is relatively simple compared to training the entire nonlinear layer, there have been efforts to rigorously analyze the landscape of single neuron training~\cite{VYS:2021,YS:2020}.
We present an existing result~\cite[Theorem~4.2]{VYS:2021} on single neuron training~\eqref{local_general} under a particular assumption on $F$.

\begin{proposition}
\label{Prop:Vardi}
In the single neuron problem~\eqref{local_general}, we assume that
\begin{equation}
    \label{Vardi}
    F(x) = \sigma (\hat{\omega}_i \cdot x + \hat{b}_i )
\end{equation}
for some $\hat{\omega}_i \in \mathbb{R}^d$ and $\hat{b}_i \in \mathbb{R}$.
Then any critical point $\{ \omega_i^*, b_i^* \} \in \mathbb{R}^{d+1}$ such that $\sigma (\omega_i^* \cdot x + b_i^*) \not\equiv 0$ on $\Omega$ is a global minimum.
\end{proposition}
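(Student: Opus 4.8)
The plan is to compute the first-order optimality conditions for $E_i$ at a critical point $\{\omega_i^*, b_i^*\}$ with $\psi_i^* := \sigma(\omega_i^* \cdot x + b_i^*) \not\equiv 0$ on $\Omega$, and to show that these conditions force $\psi_i^*$ to reproduce $F = \psi_i^{\mathrm{hat}} := \sigma(\hat\omega_i \cdot x + \hat b_i)$ in the bilinear-form sense, i.e.\ $a(\psi_i^* - F, v) = 0$ for all admissible test directions, from which $E_i(\omega_i^*, b_i^*)$ attains the global minimum value. Concretely, I would first write $E_i(\omega_i, b_i) = \tfrac12 a(\psi_i, \psi_i) - \int_\Omega F \psi_i\,dx$ and differentiate with respect to $\omega_i$ and $b_i$, using that on the active region $\{\omega_i \cdot x + b_i > 0\}$ we have $\partial_{\omega_i}\psi_i = x\,\mathbf{1}_{\{\omega_i \cdot x + b_i > 0\}}$ and $\partial_{b_i}\psi_i = \mathbf{1}_{\{\omega_i \cdot x + b_i > 0\}}$ (the ReLU kink contributes a measure-zero set, so the formal computation is justified a.e.). This yields the stationarity system
\begin{equation*}
a(\psi_i^*, x_\ell\,\mathbf{1}_{H^*}) = \int_\Omega F\, x_\ell\,\mathbf{1}_{H^*}\,dx \quad (1 \le \ell \le d), \qquad a(\psi_i^*, \mathbf{1}_{H^*}) = \int_\Omega F\,\mathbf{1}_{H^*}\,dx,
\end{equation*}
where $H^* = \{x \in \Omega : \omega_i^* \cdot x + b_i^* > 0\}$ and $x_\ell$ is the $\ell$th coordinate.

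The key structural observation is that $\psi_i^*$ itself, and any perturbation $\psi_i$ obtained by an infinitesimal change of $(\omega_i, b_i)$, is (up to the active set) an affine function restricted to $H^*$; hence $\psi_i^*$ lies in the span of $\{x_\ell \mathbf{1}_{H^*}\}_\ell \cup \{\mathbf{1}_{H^*}\}$, and the stationarity conditions say exactly that $a(\psi_i^* - F, w) = 0$ for every $w$ in that span — in particular for $w = \psi_i^*$. Using coercivity of $a$, I would then argue that the residual $r := \psi_i^* - F$ satisfies $a(r, r) = a(r, \psi_i^*) - a(r, F) = -a(r, F)$, and the case analysis on the geometry of the two half-space supports $H^*$ and $\hat H := \{\hat\omega_i \cdot x + \hat b_i > 0\}$ shows $a(r, F)$ can also be controlled: either $\hat H \subseteq H^*$ or $\hat H \supseteq H^*$ (after rescaling, the two hyperplanes either coincide or one active region contains the other on $\Omega$, since both are one-sided half-spaces intersected with $\Omega$), and in each subcase $F$ or a suitable multiple of it lies in the test space, forcing $a(r, r) = 0$ and hence $\psi_i^* = F$ a.e. Then $E_i(\omega_i^*, b_i^*) = -\tfrac12 a(F, F) = \min E_i$, since $E_i(\omega,b) = \tfrac12 a(\psi - F, \psi - F) - \tfrac12 a(F,F) \ge -\tfrac12 a(F,F)$ for all $(\omega,b)$ by coercivity.

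The main obstacle I anticipate is handling the ReLU active-set geometry carefully: the stationarity equations only test against functions supported on $H^*$, so one cannot immediately conclude $a(\psi_i^* - F, v) = 0$ for all $v \in V$; one must exploit that both $\psi_i^*$ and $F$ are ReLU profiles to show the residual is orthogonal to enough directions. In particular the subtle case is when $H^* \cap \hat H$ is a proper subset of both — one has to rule this out or show it still forces $r = 0$, presumably by noting that if the two half-spaces genuinely cross inside $\Omega$ then $F$ is not affine on $H^*$ while $\psi_i^*$ is, contradicting $a(\psi_i^*, \cdot) = \int F\,(\cdot)\,dx$ on the full test space associated with varying the hyperplane; this likely requires perturbing $(\omega_i^*, b_i^*)$ in a direction that tilts $H^*$ so as to detect the non-affine part of $F$. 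I would lean on \cite[Theorem~4.2]{VYS:2021} for the precise bookkeeping, since the coercive symmetric bilinear form $a(\cdot,\cdot)$ plays the role of the $L^2$ inner product there after composing with the operator $\mathcal{A}$ from~\eqref{cA}, and the argument transfers essentially verbatim.
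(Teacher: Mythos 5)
The paper does not prove this proposition; it is stated as an existing result with a citation to \cite[Theorem~4.2]{VYS:2021}, and your own proposal ultimately defers to the same citation. So there is no in-paper proof to compare against, and what remains to assess is whether your sketch is internally sound. It has two genuine problems.

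First, the stationarity conditions and the completion-of-squares step are written for the $L^2$ case and do not hold for the general coercive bilinear form $a(\cdot,\cdot)$ appearing in~\eqref{local_general}. Differentiating $E_i(\omega_i,b_i) = \tfrac12 a(\psi_i,\psi_i) - \int_\Omega F\psi_i\,dx$ and using $a(u,v)=\int_\Omega(\mathcal{A}u)v\,dx$ from~\eqref{cA} gives, at a critical point, the $L^2$-orthogonality $\int_\Omega\bigl(\mathcal{A}\psi_i^* - F\bigr)w\,dx = 0$ for $w$ in the span you describe (this is exactly the computation the paper carries out in the proof of \cref{Thm:local}), \emph{not} $a(\psi_i^*-F,w)=0$: the latter would require $\int_\Omega Fw\,dx = a(F,w)=\int_\Omega(\mathcal{A}F)w\,dx$, i.e.\ $\mathcal{A}F=F$. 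By the same token $\int_\Omega F\psi_i\,dx \neq a(F,\psi_i)$ in general, so the identity $E_i(\omega,b)=\tfrac12 a(\psi-F,\psi-F)-\tfrac12 a(F,F)$ and the resulting lower bound $-\tfrac12 a(F,F)$ only hold when $\mathcal{A}$ is the identity, i.e.\ the $L^2$-approximation setting~\eqref{L2}. That is exactly the setting of \cite{VYS:2021}; your claim that the argument "transfers essentially verbatim" to general $a(\cdot,\cdot)$ after composing with $\mathcal{A}$ is unsupported and, as written, false.

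Second, the half-space case analysis, namely ruling out (or handling) a transversal crossing of $H^*$ and $\hat H$ inside $\Omega$, is where the actual content of \cite[Theorem~4.2]{VYS:2021} lives, and your sketch of it ("tilt $H^*$ to detect the non-affine part of $F$") is a plan rather than an argument, as you yourself acknowledge. The paper sidesteps both issues entirely: it cites the result for the special target~\eqref{Vardi} and then separately proves the weaker but $F$- and $a$-general \cref{Thm:local} by a short direct computation, and it is \cref{Thm:local}, not this proposition, that is actually used in the design of NPSC.
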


\Cref{Prop:Vardi} explains that the landscape of the single neuron problem~\eqref{local_general} is rather manageable: it comprises a flat region of parameters that make the neuron zero, and any critical point outside this flat region serves as a global minimum. This insight sheds light on single neuron training; under the assumption~\eqref{Vardi} on $F$, if we choose an initial guess for a monotone training algorithm below the flat region, then the algorithm converges to a global minimum.

However, the assumption~\eqref{Vardi} is quite strong and not applicable to our case.
Fortunately, we are still able to obtain a somewhat similar result to \cref{Prop:Vardi} for the general $F$.
In \cref{Thm:local}, we show that any nontrivial critical point has a lower energy than the flat energy region consisting of zero neurons.

\begin{theorem}
\label{Thm:local}
In the single neuron problem~\eqref{local_general}, any critical point $\{ \omega_i^*, b_i^* \} \in \mathbb{R}^{d+1}$ such that $\sigma (\omega_i^* \cdot x + b_i^*) \not\equiv 0$ on $\Omega$ satisfies $E_{i}(\omega_i^*, b_i^*) < E_{i}(\mathbf{0}, 0)$.
\end{theorem}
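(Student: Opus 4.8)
The plan is to exploit the one-homogeneity of ReLU. For a critical point $\{\omega_i^*, b_i^*\}$ with $\psi_i^* := \sigma(\omega_i^* \cdot x + b_i^*) \not\equiv 0$ on $\Omega$, introduce the scalar family $t \mapsto E_i(t\omega_i^*, t b_i^*)$ for $t \geq 0$. Since $\sigma(t\omega_i^*\cdot x + tb_i^*) = t\,\psi_i^*$ for $t \geq 0$, this one-parameter slice is the quadratic $g(t) := \tfrac{t^2}{2}a(\psi_i^*,\psi_i^*) - t\int_\Omega F\psi_i^*\,dx$. First I would observe that $g(0) = E_i(\mathbf 0, 0)$ — the value on the flat zero-neuron region — and $g(1) = E_i(\omega_i^*, b_i^*)$. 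So it suffices to show $g(1) < g(0)$, i.e. $\int_\Omega F\psi_i^*\,dx > \tfrac12 a(\psi_i^*,\psi_i^*) > 0$ (the bilinear form is coercive and $\psi_i^*\not\equiv 0$, so the right inequality is automatic).

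The crucial input is the criticality condition. Because $\{\omega_i^*,b_i^*\}$ is a critical point of $E_i$, the directional derivative of $E_i$ along the radial direction $(\omega_i^*, b_i^*)$ must vanish: $\frac{d}{dt}\big|_{t=1} g(t) = 0$, which gives exactly $a(\psi_i^*,\psi_i^*) = \int_\Omega F\psi_i^*\,dx$. (One must justify that the radial directional derivative of $E_i$ at the critical point equals $g'(1)$; this follows from the chain rule together with the fact that on the ray $t\ge 0$ the map is smooth in $t$, and at a critical point every directional derivative, including this one-sided one evaluated from the smooth side, is zero.) Substituting this identity, $E_i(\omega_i^*, b_i^*) = g(1) = \tfrac12 a(\psi_i^*,\psi_i^*) - a(\psi_i^*,\psi_i^*) = -\tfrac12 a(\psi_i^*,\psi_i^*)$, while $E_i(\mathbf 0,0) = g(0) = 0$. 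Coercivity of $a(\cdot,\cdot)$ and $\psi_i^*\not\equiv 0$ give $a(\psi_i^*,\psi_i^*) > 0$, hence $E_i(\omega_i^*, b_i^*) = -\tfrac12 a(\psi_i^*,\psi_i^*) < 0 = E_i(\mathbf 0, 0)$.

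The main obstacle I anticipate is the differentiability bookkeeping at the critical point. The map $(\omega_i,b_i)\mapsto \sigma(\omega_i\cdot x + b_i)$ is not differentiable everywhere (ReLU has a kink), so "critical point" needs a precise meaning — presumably a point where the Fréchet derivative of $E_i$ exists and vanishes, or a point satisfying the Euler–Lagrange stationarity condition $\int_\Omega (\mathcal A\psi_i^* - F)\,\sigma'(\omega_i^*\cdot x + b_i^*)\,(x,1)\,dx = 0$. I would tie the argument to whichever definition the paper uses: in either case, testing stationarity against the radial direction $(\omega_i^*, b_i^*)$ and using $\sigma'(s)\,s = \sigma(s)$ pointwise recovers precisely $a(\psi_i^*,\psi_i^*) = \int_\Omega F\psi_i^*\,dx$, so the conclusion is robust to the precise formulation. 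A minor side check is that $E_i$ is finite (hence the integrals converge), which is immediate since $\psi_i^* \in L^2(\Omega)$ and $F\in L^2(\Omega)$.
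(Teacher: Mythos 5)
Your proof is correct and is essentially the paper's argument: the key identity $a(\psi_i^*,\psi_i^*)=\int_\Omega F\psi_i^*\,dx$ that you obtain from $g'(1)=0$ is exactly what the paper obtains by dotting the vanishing gradient $\nabla E_i(\omega_i^*,b_i^*)=\mathbf 0$ with the radial vector $(\omega_i^*,b_i^*)$, and the conclusion $E_i(\omega_i^*,b_i^*)=-\tfrac12 a(\psi_i^*,\psi_i^*)<0=E_i(\mathbf 0,0)$ is identical. Your scaling-slice formulation, built on the positive $1$-homogeneity of ReLU, is a slightly tidier packaging that sidesteps the paper's explicit restriction to the activation set $D=\{x\in\Omega:\omega_i^*\cdot x+b_i^*>0\}$ and the pointwise substitution $\sigma'\equiv 1$ there, but the underlying computation is the same.
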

\begin{proof}
Since $\sigma (\omega_i^* \cdot x + b_i^*)$ is nontrivial on $\Omega$, the set
\begin{equation*}
    D = \left\{ x \in \Omega : \omega_i^* \cdot  x + b_i^* > 0 \right\}
\end{equation*}
has nonzero measure.
Note that $\sigma (\omega_i^* \cdot x + b_i^*) = \omega_i^* \cdot x + b_i^*$ and $\sigma' (\omega_i^* \cdot x + b_i^*) = 1$ on $D$.
Since $\nabla E_{i } (\omega_i^*, b_i^*) = \mathbf{0}$, we obtain
\begin{multline}
    \label{E_loc_grad}
    \mathbf{0} = \nabla E_{i} (\omega_i^*, b_i^*) 
    = \int_{\Omega} \left( \mathcal{A} \sigma (\omega_i^* \cdot x + b_i^*) - F(x) \right) \sigma' ( \omega_i^* \cdot  x + b_i^* ) \begin{bmatrix} x\\1 \end{bmatrix} \,dx \\
    = \int_{D} \left( \mathcal{A} (\omega_i^* \cdot x + b_i^*) - F(x) \right) \begin{bmatrix} x\\1 \end{bmatrix} \,dx,
\end{multline}
where the operator $\mathcal{A}$ was given in~\eqref{cA} and the integrals are done entrywise.
It follows that
\begin{equation*}
\begin{split}
    E_{i}&(\omega_i^*, b_i^*)
    = \frac{1}{2} \int_{D} \left[  \mathcal{A} (\omega_i^* \cdot x + b_i^* ) \right] (\omega_i^* \cdot x + b_i^* ) \,dx - \int_{D} F(x) (\omega_i^* \cdot x + b_i^* ) \,dx \\
    &= \begin{bmatrix} \omega_i^* \\ b_i^* \end{bmatrix} \cdot \int_{D} \left[ \mathcal{A} (\omega_i^* \cdot x + b_i^* ) - F(x) \right] \begin{bmatrix} x \\ 1 \end{bmatrix} \,dx - \frac{1}{2} \int_D \left[ \mathcal{A} (\omega_i^* \cdot x + b_i^* ) \right] (\omega_i^* \cdot x + b_i^* ) \,dx\\
    &\stackrel{\eqref{E_loc_grad}}{=} - \frac{1}{2} \int_D \left[ \mathcal{A} (\omega_i^* \cdot x + b_i^* ) \right] (\omega_i^* \cdot x + b_i^* ) \,dx \\
    &< 0 = E_{i} (\mathbf{0}, 0),
\end{split}
\end{equation*}
which completes the proof.
\end{proof}

\Cref{Thm:local} implies that, if we find a local minimum of~\eqref{local_general} outside the flat region of zero neurons by using some training algorithm, then that local minimum must have a lower energy than the zero neuron.
Consequently, this results in the corresponding neuron being activated, indicating its positive contribution to the training process.

Thanks to \cref{Thm:local}, we can ensure that choosing an initial guess $\{ \omega_i^{(0)}, b_i^{(0)} \}$  such that $E_{i} (\omega_i^{(0)}, b_i^{(0)}) < E_{i} (\mathbf{0})$ makes the training algorithm to converge to a good local minimum that prevents the neuron to be on flat energy regions.
That is, the $\{ \omega_i, b_i \}$-minimization problems of NPSC avoid flat regions.

Various optimization algorithms including first- and second-order methods can be used to solve~\eqref{local_general}.
For our test problems, we propose to use the Levenberg--Marquardt algorithm~\cite{Marquardt:1963}.
The major computational cost of each iteration of the Levenberg--Marquardt algorithm is to solve a linear system with $d+1$ unknowns; it is not time-consuming when $d+1$ is not big.
Furthermore, the Levenberg--Marquardt algorithm, which does not require explicit assembly of the Hessian, converges to a local minimum with the superlinear convergence rate~\cite{YF:2001}, hence, it is much faster than other first-order methods.

\subsection{Adjustment of parameters}
\label{Subsec:Adj}
Here, we discuss the necessity of adjusting parameters during the training.
In~\cite[Theorem~3.1]{VYS:2021}, it was shown that a neuron is initialized as zero with probability close to half if we employ a usual random initialization scheme~\cite{HZRS:2015}.
Such a zero neuron makes the functions $\{ \sigma (\omega_i \cdot x + b_i ) \}_{i=1}^n$ linearly dependent.
Meanwhile, linear dependence may occur by neurons whose nodal points are outside $\Omega$; such scenarios are summarized in \cref{Prop:dependence}.

\begin{proposition}
\label{Prop:dependence}
In the neural network~\eqref{NN}, we have the following:
\begin{enumerate}
    \item If a neuron satisfies $b_i \leq - \max_{x \in \overline{\Omega}} \omega_i \cdot x$, then it is zero.
    \item If there are more than $d+1$ neurons such that $b_i \geq - \min_{x \in \overline{\Omega}} \omega_i \cdot x$, then the functions $\sigma (\omega_i \cdot x + b_i)$ corresponding to these neurons are linearly dependent on $\Omega$.
\end{enumerate}
\end{proposition}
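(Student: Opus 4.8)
The plan is to verify each statement by reducing the ReLU functions in question to \emph{affine} functions on $\overline{\Omega}$ and then invoking an elementary dimension count.

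For the first claim, I would observe that the hypothesis $b_i \leq -\max_{x\in\overline{\Omega}} \omega_i \cdot x$ gives, for every $x \in \overline{\Omega}$,
\begin{equation*}
\omega_i \cdot x + b_i \leq \omega_i \cdot x - \max_{y \in \overline{\Omega}} \omega_i \cdot y \leq 0 .
\end{equation*}
Hence $\sigma(\omega_i \cdot x + b_i) = \max\{0, \omega_i \cdot x + b_i\} = 0$ on $\overline{\Omega}$, and in particular on $\Omega$, so the neuron is zero.

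For the second claim, the symmetric observation is that any neuron with $b_i \geq -\min_{x\in\overline{\Omega}} \omega_i \cdot x$ satisfies $\omega_i \cdot x + b_i \geq 0$ for all $x \in \overline{\Omega}$, and therefore $\sigma(\omega_i \cdot x + b_i) = \omega_i \cdot x + b_i$ there; that is, the corresponding function coincides on $\Omega$ with the affine function $x \mapsto \omega_i \cdot x + b_i$. Every such affine function lies in the span of $\{1, x_1, \dots, x_d\}$, which is a vector space of dimension $d+1$; moreover, since $\Omega$ is a (bounded) domain it has nonempty interior, and an affine function vanishing on a nonempty open set is identically zero, so the restriction map to $\Omega$ does not lower the dimension. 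Consequently the restricted affine functions still span a space of dimension at most $d+1$, and any collection of more than $d+1$ of them is linearly dependent on $\Omega$.

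There is essentially no obstacle here: both statements follow from the piecewise-affine structure of $\sigma$ together with the fact that the space of affine functions on $\mathbb{R}^d$ is $(d+1)$-dimensional. The only point deserving a word of justification is that restriction to $\Omega$ preserves linear independence of affine functions, which is immediate from $\Omega$ having nonempty interior.
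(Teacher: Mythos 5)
Your proof is correct and follows essentially the same argument as the paper: both claims reduce to the observation that the relevant ReLU functions are affine on $\overline{\Omega}$, with the second claim concluded by the dimension count $\dim\{\text{affine functions}\} = d+1$. Your extra remark that restriction to $\Omega$ preserves independence of affine functions (since $\Omega$ has nonempty interior) is a small detail the paper leaves implicit, but it does not change the approach.
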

\begin{proof}
If $b_i \leq - \max_{x \in \overline{\Omega}} \omega_i \cdot x$ for some $i$, then we have
\begin{equation*}
    \omega_i \cdot x + b_i \leq \max_{x \in \overline{\Omega}} \omega_i \cdot x + b_i \leq 0
    \quad \text{ in }\Omega.
\end{equation*}
Hence, $\sigma (\omega_i \cdot x + b_i) =0 $ on $\Omega$, which means that the corresponding neuron is zero.

Now, we suppose that $b_i \geq - \min_{x \in \overline{\Omega}} \omega_i \cdot x$ for $1 \leq i \leq d +2$.
Then we have
\begin{equation*}
    \omega_i \cdot x + b_i \geq \min_{x \in \overline{\Omega}} \omega_i \cdot x + b_i \geq 0
    \quad \text{ in }\Omega.
\end{equation*}
This implies that $\sigma (\omega_i \cdot x + b_i) = \omega_i \cdot x +b_i$ on $\Omega$, i.e., each $\sigma (\omega_i \cdot x + b_i)$ is linear on $\Omega$.
Since the dimension of the space of all linear functions on $\Omega$ is $d+1$, $\{ \sigma (\omega_i \cdot x + b_i) \}_{i=1}^{d+2}$ are linearly dependent on $\Omega$.
\end{proof}

Linear dependence of the functions $\{ \sigma (\omega_i \cdot x + b_i ) \}_{i=1}^n$ in the neural network~\eqref{NN} should be avoided because it means that some neurons do not contribute to the approximability of~\eqref{NN}.
\cref{Prop:dependence} motivates us to consider an adjustment procedure for $\{\omega^{(k)}, b^{(k)}\}$ at each iteration of NPSC to avoid linear dependence of neurons.
\cref{Alg:adj} summarizes the adjustment procedure for a given $\{\omega, b\} \in W$.

\begin{algorithm}[H]
\caption{Adjustment for $\{\omega, b \}$ in \cref{Alg:NPSC}}
\begin{algorithmic}[]
\label{Alg:adj}
\STATE Set $m = 0$.
\FOR{$i = 1, \dots, n$}
\STATE Set $\displaystyle
\omega_i \leftarrow \frac{\omega_i}{|\omega_i|}$ and $\displaystyle b_i \leftarrow \frac{b_i}{|\omega_i|}$.
\IF {$\displaystyle b_i \leq -\max_{x \in \overline{\Omega}} \omega_i \cdot x$}
\STATE Reset $b_i \in \mathbb{R}$ such that $\displaystyle b_i \sim \operatorname{Uniform} \left( -\max_{x \in \overline{\Omega}} \omega_i \cdot x, -\min_{x \in  \overline{\Omega}} \omega_i \cdot x \right) $.
\ENDIF
\IF {$\displaystyle b_i \geq -\min_{x \in  \overline{\Omega}} \omega_i \cdot x $}
\IF {$m > d+1$}
\STATE Reset $b_i \in \mathbb{R}$ such that $\displaystyle b_i \sim \operatorname{Uniform} \left( -\max_{x \in \overline{\Omega}} \omega_i \cdot x, -\min_{x \in  \overline{\Omega}} \omega_i \cdot x \right) $.
\ELSE
\STATE $m \leftarrow m+1$
\ENDIF
\ENDIF
\ENDFOR
\end{algorithmic}
\end{algorithm}

Since $\omega_i$ determines the direction of a function $\sigma (\omega_i \cdot x + b_i)$, we may normalize $\{\omega_i, b_i\}$ so that $|\omega_i| = 1$ in \cref{Alg:adj}.
Then, for each $b_i$ that is not on the interval between the minimum and maximum of $\omega_i \cdot x$, we relocate it to the interval.
This relocation step helps avoid linear dependence in neurons described in \cref{Prop:dependence}.

In \cref{Alg:adj}, evaluations of the extrema of $\omega_i \cdot x$ on $\overline{\Omega}$ are simple linear programs, and hence can be done efficiently by conventional algorithms for linear programming~\cite{NW:2006}.
In particular, if $\Omega$ is a polyhedral domain, we can compute $\omega_i \cdot x$ at all the vertices of $\Omega$ and take the extrema among them.

\subsection{Backtracking for learning rates}
\label{Subsec:Backt}
We utilize a backtracking scheme, discussed in \cref{Alg:NPSC}, to find a suitable learning rate $\tau_k$. This approach relieves us from the burden of tuning the learning rate, and also improve the convergence of NPSC.
It was recently shown in~\cite{Park:2022} that parallel subspace correction methods for convex optimization problems can be accelerated by adopting a backtracking scheme.
Although, due to the nonconvexity and nonsmoothness of the loss function $E$ of~\eqref{model}, we are not able to adopt the backtracking scheme proposed in~\cite{Park:2022} or the conventional backtracking schemes such as~\cite{CC:2019,SGB:2014} directly to \cref{Alg:NPSC}, 
a simple but effective backtracking scheme for finding $\tau_k$ presented in \cref{Alg:backt} can still be used.
By allowing adaptive increase and decrease of $\tau_k$ along the epochs, the convergence rate of NPSC is improved, as shown in \cref{App:AblationStudies}.

\begin{algorithm}[H]
\caption{Backtracking scheme to find $\tau_k$ in \cref{Alg:NPSC}}
\begin{algorithmic}[]
\label{Alg:backt}
\STATE Choose a minimum learning rate $\tau_{\min} = 10^{-12}$.
\STATE $\tau_k \leftarrow 2\tau_{k-1}$
\REPEAT
\FOR{$i = 1, \dots, n$ \textbf{in parallel}}
\STATE $\displaystyle
\hat{\omega}_i = (1- \tau_k) \omega_i^{(k)} + \tau_k \omega_i^{(k+\frac{1}{2})}
$
\STATE $\displaystyle
\hat{b}_i = (1- \tau_k) b_i^{(k)} + \tau_k b_i^{(k+\frac{1}{2})}
$
\ENDFOR
\IF {$\displaystyle E (\{ a^{(k+1)}, \hat{\omega}, \hat{b} \}) > E(\{ a^{(k+1)}, \omega^{(k)}, b^{(k)} \})$}
\STATE $\tau_k \leftarrow \tau_k / 2$
\ENDIF
\UNTIL{$\displaystyle E (\{ a^{(k+1)}, \hat{\omega}, \hat{b} \}) \leq E(\{ a^{(k+1)}, \omega^{(k)}, b^{(k)} \})$ or $\tau_k < \tau_{\min}$}
\end{algorithmic}
\end{algorithm}

\section{Numerical experiments}
\label{Sec:Numerical}
In this section, we present numerical results of NPSC applied to function approximation problems and PDEs of the form~\eqref{model}.
The following algorithms are compared with NPSC in our numerical experiments: gradient descent~(GD), Adam~\cite{KB:2015}, hybrid least-squares/gradient descent~(LSGD)~\cite{CGPPT:2020}.
All the algorithms were implemented in ANSI C with OpenMPI compiled by Intel C++ Compiler.
They were executed on a computer cluster equipped with multiple Intel Xeon SP-6148 CPUs~(2.4GHz, 20C) and the operating system CentOS~7.4 64bit.

In all the experiments, we use the He initialization~\cite{HZRS:2015},
that is, we set $a_i \sim N(0, 2/n)$, $\omega_i \sim (N(0, 2/d))^{d}$, and $b_i \sim (N(0, 2/d))$ in~\eqref{model}.
All the numerical results presented in this section are averaged over 10 random initializations.
As shown in \cref{App:AblationStudies}, the performances of conventional training algorithms can be improved by utilizing the backtracking scheme presented in \cref{Alg:backt}.
Hence, for GD, Adam, and LSGD, we employ \cref{Alg:backt} to find learning rates.
For $a$-minimization problems~\eqref{a_min_linear}, we use the least square method~(as implemented in the LAPACK~\cite{ABB+:1999} \texttt{gelsd} subroutine) in LSGD, and the preconditioned conjugate gradient method~(PCG) with the preconditioner $P$ in \cref{Thm:preconditioner} in NPSC.  The stopping criterion for PCG is
\begin{equation}
    \label{PCG_stop}
    \frac{\| Ka^{(k)} - \beta \|_{\ell^2}}{\|Ka^{(0)} - \beta \|_{\ell^2}} < 10^{-10}.
\end{equation}
Finally, $\{ \omega_i, b_i \}$-minimization problems~\eqref{local_general} in NPSC are solved by the Levenberg--Marquardt algorithm~\cite{Marquardt:1963} with the stopping criterion
\begin{equation*}
    \frac{| E_{i}^{(n-1)} - E_{i}^{(n)} |}{| E_{i}^{(n)} | }  < 10^{-10}.
\end{equation*}
MPI parallelization is applied to NPSC in a way that each $\{ \omega_i, b_i \}$-minimization problem is assigned to a single processor and solved in parallel.

\subsection{\texorpdfstring{$L^2$}{L2}-function approximation problems}
\begin{figure}
  \centering
  \includegraphics[width=0.77\hsize]{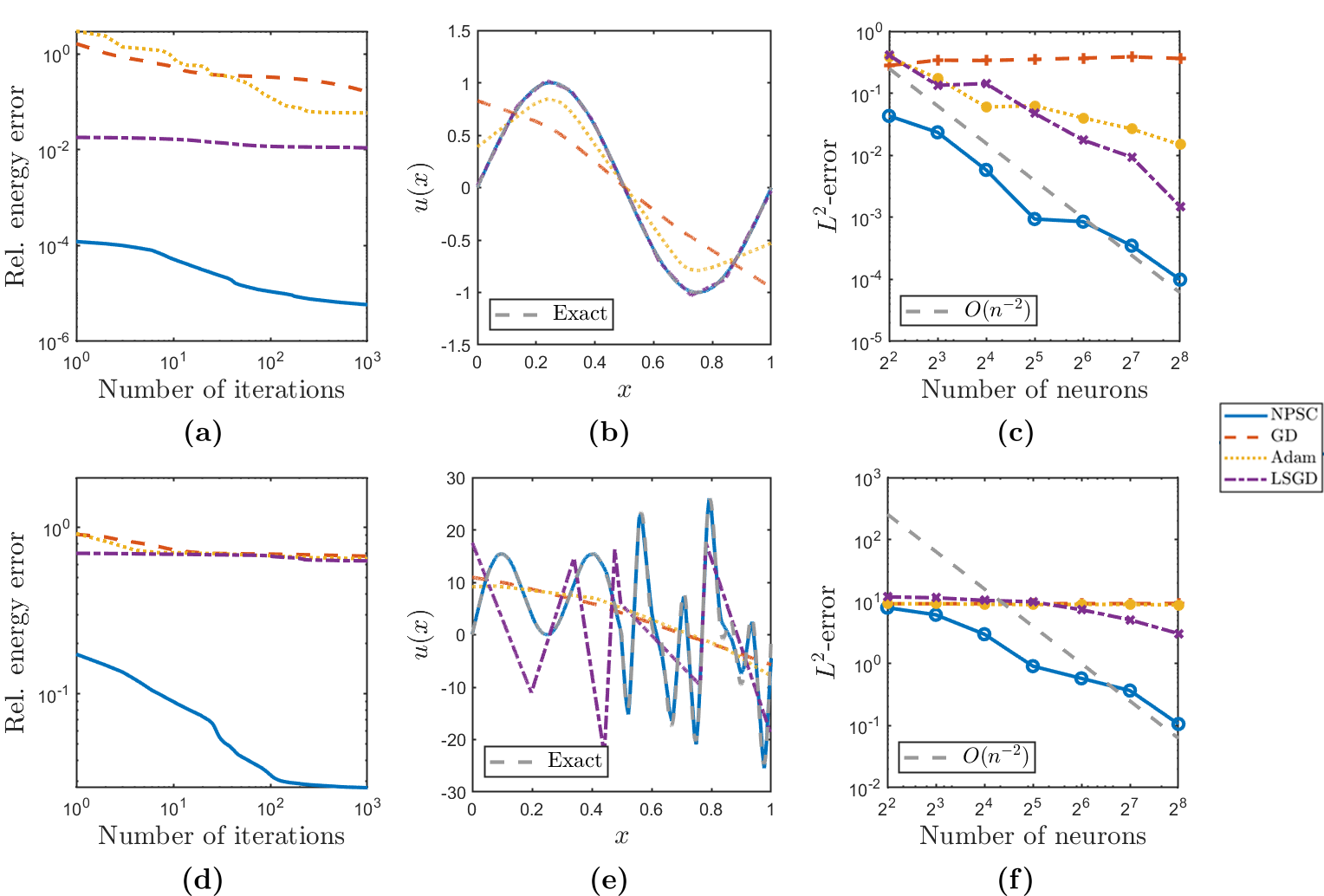}
  \caption{Numerical results for the function approximation problems {\rm\textbf{(a--c)}}~\eqref{Ex1} and {\rm\textbf{(d--f)}}~\eqref{Ex2}.
  {\rm\textbf{(a, d)}} Decay of the relative energy error $\frac{E(\theta^{(k)}) - E^*}{|E^*|}$ in various training algorithms~($n = 2^5$).
  {\rm\textbf{(b, e)}} Exact solution and its approximations~($n = 2^5$, $10^3$ epochs).
  {\rm\textbf{(c, f)}} $L^2$-errors with respect to the number of neurons~($10^3$ epochs).
  }
  \label{Fig:Ex12}
\end{figure}

If we set the bilinear form in~\eqref{model} to be~\eqref{L2}, we obtain the $L^2$-function approximation problem, which is the most elementary instance of~\eqref{model}. The solution of~\eqref{model} in this case is the best $L^2$-approximation of $f$ found in $\Sigma_n$.

\subsubsection*{Test 1}
As our first example, we consider $L^2$-approximation~\eqref{L2} for a sine function; we set
\begin{equation}
\label{Ex1}
\Omega = (0, 1) \subset \mathbb{R}, \quad
f(x) = \sin 2\pi x
\end{equation}
in~\eqref{model}.
For numerical integration, we use the trapezoidal rule on 10,000 uniformly sampled points; see \cref{App:Integration}.
\cref{Fig:Ex12}(a) plots the relative energy errors $\frac{E(\theta^{(k)}) - E^*}{|E^*|}$ obtained by NPSC, GD, Adam, and LSGD per epoch, where $E^*$ is the loss corresponding to the exact solution.
It is clear from the loss decay that NPSC outperforms all the other methods.
The exact solution of~\eqref{Ex1} and its approximations obtained by $10^3$ epochs of the training algorithms with $2^5$ neurons are depicted in \cref{Fig:Ex12}(b).
One can readily observe that the NPSC result is the most accurate among all.
The $L^2$-errors between the exact solution and its approximations for various numbers of neurons are presented in \cref{Fig:Ex12}(c).
Only the NPSC result seems to be comparable to $\mathcal{O}(n^{-2})$, the theoretical optimal rate derived in~\cite{SX:2022c}.

\subsubsection*{Test 2}
We revisit~\eqref{Ex2_pre} as the second example, a highly oscillatory instance of~\eqref{L2}:
\begin{equation}
\label{Ex2}
\Omega = (0, 1) \subset \mathbb{R}, \quad
f(x) = \begin{cases}
10 \left( \sin 2\pi x + \sin 6 \pi x \right), & \quad \textrm{ if } 0 \leq x < \frac{1}{2}, \\
10 \left( \sin 8\pi x + \sin 18 \pi x + \sin 26 \pi x \right), & \quad \textrm{ if } \frac{1}{2} \leq x \leq 1.
\end{cases}
\end{equation}
A similar problem appeared in~\cite{CLL:2020}.
We note that it was demonstrated in~\cite{KGZKM:2021} that training for complex functions like~\eqref{Ex2} is a much harder task than training for simple functions.
Same as in~\eqref{Ex1}, we use the trapezoidal rule on 10,000 uniformly sampled points for numerical integration.
\cref{Fig:Ex12}(d--f) present numerical results for the problem~\eqref{Ex2}.
In \cref{Fig:Ex12}(d), NPSC shows stable decay of the loss while the losses of other methods are stagnant along the epochs.
In \cref{Fig:Ex12}(e), one can observe that the NPSC result captures both low- and high-frequency parts of the target function well, while the other ones capture the low-frequency part only~\cite{XZLXZ:2020}.
Moreover, as shown in \cref{Fig:Ex12}(f), the $L^2$-error of the NPSC decreases when the number of neurons increases, while those of the other algorithms seem to stagnate.
This highlights the robustness of NPSC for oscillatory functions.

\subsection{Elliptic PDEs}
\begin{figure}
  \centering
  \includegraphics[width=\hsize]{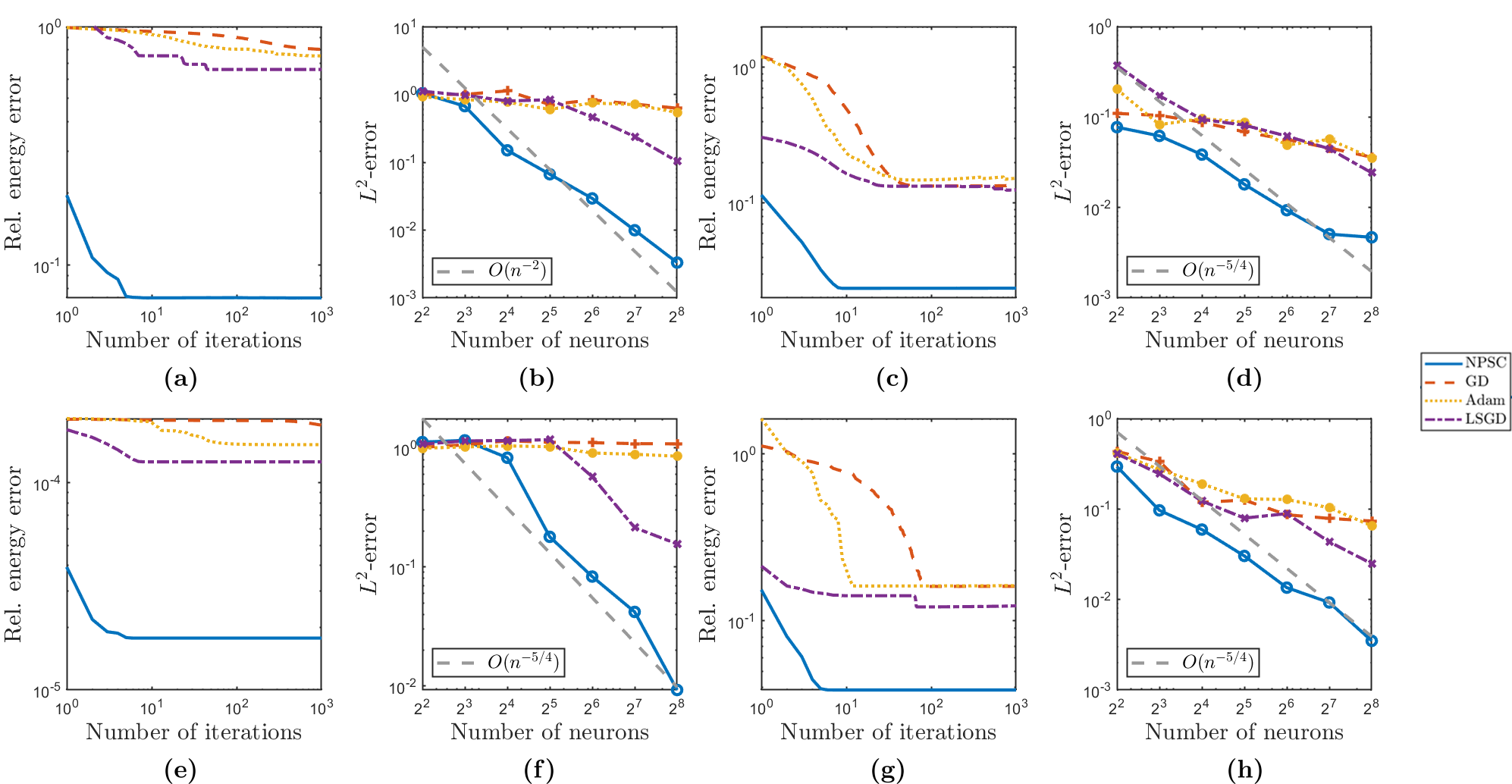}
  \caption{Numerical results for the elliptic PDEs {\rm\textbf{(a, b})}~\eqref{Ex3}, {\rm\textbf{(c, d)}}~\eqref{Ex4}, {\rm\textbf{(e, f})}~\eqref{Ex5}, and {\rm\textbf{(g, h})}~\eqref{Ex6}.
  {\rm\textbf{(a, c, e, g)}} Decay of the relative energy error $\frac{E(\theta^{(k)}) - E^*}{|E^*|}$ in various training algorithms~($n = 2^5$).
  {\rm\textbf{(b, d, f, h)}}~$L^2$-errors with respect to the number of neurons~($10^3$ epochs).
  }
  \label{Fig:Ex34}
\end{figure}

Next, we consider the case
\begin{equation}
    \label{Neumann}
    V = H^1 (\Omega), \quad
    a(u,v) = \int_{\Omega} \left( \alpha \nabla u \cdot \nabla v + uv \right) \,dx, \quad u,v \in V
\end{equation}
in the problem~\eqref{model}, where $H^1 (\Omega)$ is the usual Sobolev space consisting of $L^2 (\Omega)$-functions with square-integrable gradients, and $\alpha \in L^{\infty} (\Omega)$ is a function satisfying $\alpha \geq \alpha_0$ in $\Omega$ for some $\alpha_0 > 0$.
It is well-known that~\eqref{model} is the Galerkin approximation of the weak formulation of the following elliptic PDE on $\Sigma_n$~\cite{Xu:2020}:
\begin{equation*}
    - \nabla \cdot (\alpha \nabla u) + u = f \quad \text{ in } \Omega, \quad
    \frac{\partial u}{\partial n} = 0 \quad \text{ on } \partial \Omega,
\end{equation*}
where $\partial u / \partial n$ denotes the normal derivative of $u$ to $\partial \Omega$.
In the following, we carry out four different numerical experiments in both one and two dimensions, using constant and oscillating coefficient functions $\alpha$.

\subsubsection*{Test 1}
In this test, we set
\begin{equation}
\label{Ex3}
\Omega = (0, 1) \subset \mathbb{R}, \quad
\alpha = 1, \quad
f(x) = (1 + 9\pi^2) \cos 3\pi x + (1+121\pi^2) \cos 11\pi x
\end{equation}
in~\eqref{Neumann}. The exact solution is $u(x) = \cos 3\pi x + \cos 11\pi x$.
We can observe in \cref{Fig:Ex34}(a) that NPSC achieves a superior level of accuracy that other algorithms do not reach even with a large number of epochs.
As shown in \cref{Fig:Ex34}(b), $L^2$-errors of the NPSC results decrease as the number of neuron increases, and the decreasing rate is comparable to the theoretical optimal rate in~\cite{SX:2022c}.

\subsubsection*{Test 2}
We consider the following two-dimensional example for~\eqref{Neumann}:
\begin{equation}
    \label{Ex4}
    \Omega = (0,1)^2 \subset \mathbb{R}^2, \quad
    \alpha = 1, \quad
    f(x_1, x_2) = (1 + 2\pi^2) \cos \pi x_1 \cos \pi x_2,
\end{equation}
whose exact solution is $u(x_1, x_2) = \cos \pi x \cos \pi y$.
Since the preconditioner in \cref{Thm:preconditioner} is applicable for one dimension only, in this example, $a$-minimization problems are solved by $n$ iterations of the unpreconditioned conjugate gradient method.
We use the quasi-Monte Carlo method~\cite{Caflisch:1998} with 10,000 sampling points for numerical integration; see \cref{App:Integration} for details.
It is verified by the numerical results for~\eqref{Ex4} presented in \cref{Fig:Ex34}(c, d) that NPSC outperforms other methods and provides reasonable $L^2$-errors in high-dimensional problems as well.

\subsubsection*{Test 3}
Now, we examine examples where the coefficient function $\alpha$ exhibits oscillatory behavior.
We consider the following example:
\begin{equation}
    \label{Ex5}
    \Omega = (0,1) \subset \mathbb{R}, \quad
    \alpha(x) = \sin(6\pi x) + 2, \quad u(x) = \cos(12\pi x) + \cos(16\pi x).
\end{equation}
The numerical results are plotted in \cref{Fig:Ex34}(e, f).
Similar to the case of constant $\alpha$, we observe that NPSC outperforms the other algorithms in terms of accuracy.

\subsubsection*{Test 4}
Lastly, we consider the following two-dimensional example for~\eqref{Neumann} with an oscillatory coefficient function $\alpha$:
\begin{equation}
    \label{Ex6}
    \Omega = (0,1)^2 \subset \mathbb{R}^2, \quad
    \alpha(x_1, x_2) = \sin(6\pi x_1) + 2, 
    \quad
    u(x_1, x_2) = \cos (\pi x_1) \cos (\pi x_2 ).
\end{equation}
For this example with $2^8$ neurons, we utilize 20,000 sampling points for numerical integration due to the oscillatory nature of the problem that necessitates a more accurate quadrature.
As shown in \cref{Fig:Ex34}(g, h), the proposed NPSC consistently outperforms other algorithms in terms of accuracy.

\begin{remark}
\label{Rem:ReLUk}
When we train a neural network~\eqref{NN} for solving the PDE~\eqref{Neumann} with a gradient-based method, we have to evaluate the second-order derivative of $\sigma (x)$, which is the Dirac delta function.
In our experiments, we simply ignore Dirac delta terms in numerical integration.
This motivates us to consider high-order activation functions like ReLU$^k$~\cite{SX:2022b} as a future work.
\end{remark}

\subsection{Ablation studies}

We also validate the key components of the proposed NPSC method, namely, the optimal preconditioner for the linear layer~(\cref{Thm:preconditioner}), the Levenberg--Marquardt algorithm for $\{ \omega_i, b_i \}$-minimization problems, the adjustment step for parameters~(\cref{Alg:adj}), and the backtracking scheme for learning rates~(\cref{Alg:backt}) by conducting ablation studies. The numerical results are discussed in~\cref{App:AblationStudies}.


\section{Conclusion}
\label{Sec:Conclusion}
In this paper, we proposed NPSC for the finite neuron method with ReLU neural networks.
Separately designing efficient solvers for the linear layer and each neuron and training them alternately, NPSC yields accurate results for function approximation problems and PDEs. With the NPSC method, one can separately design solvers for the linear and nonlinear layers.
We note that, while the proposed optimal preconditioner is only available for one-dimensional problems, the parallel neuron-wise optimization of parameters in the nonlinear layer is not limited by dimensions.

\subsection{Limitations and future directions}
This paper leaves us several interesting and important topics for future research.

In terms of mathematical theory, although NPSC adopts the well-established framework of subspace correction methods, its rigorous convergence analysis remains an open question due to the nonconvexity of the model.

Regarding accuracy, while NPSC provides more accurate solutions than conventional training algorithms, its performance may not surpass that of classical adaptive finite element methods unless the parameters are initialized sufficiently well.
This suggests the necessity of developing an improved training algorithm that achieves superior accuracy to classical methods regardless of initialization of parameters, which poses a significant challenge given the nonconvex nature of training neural networks.

Concerning the range of applications, addressing high-dimensional problems with NPSC requires the design of effective preconditioners for optimizing the linear layer.
Generalizing the one-dimensional preconditioner proposed in this paper to higher dimensions poses a nontrivial and challenging task.
Another critical challenge in solving high-dimensional problems is numerical quadrature.
Approximating integrals in high dimensions demands a substantial number of integration points, resulting in a significant computational cost.

\appendix
\section{Implementation details for the optimal preconditioner}
\label{App:Implementation}
In this appendix, we discuss the computational aspects of the proposed preconditioner.
Although the preconditioner $P$ defined in~\eqref{P} seems a bit complicated at the first glance, its computation requires only a cheap cost.
We provide a detailed explanation of how to apply the preconditioner $P$ efficiently.

Three nontrivial parts in the preconditioner $P$ are the inverses of the matrices $B_1$, $\overline{K}_{\phi}$, and $R \overline{P}^{-1} R^{\mathrm{T}}$.
First, we consider how to compute $B_1^{-1} \alpha$ when a vector $\alpha \in \mathbb{R}^{n_{\Omega}+2}$ is given.
We solve a linear system $B_1 \beta = \alpha$ in order to obtain $B_1^{-1} \alpha$.
Thanks to the sparsity pattern of $B_1$, this linear system can be solved directly by the following $\mathcal{O}(n)$ elementary arithmetic operations:
\begin{equation*}
    \beta_i = \begin{cases}
    \dfrac{1}{(B_1)_{ii}} \left( \alpha_i - \dfrac{(B_1)_{i,n_{\Omega}+1}}{(B_1)_{n_{\Omega}+1,n_{\Omega}+1}}\alpha_{n_{\Omega}+1} - \dfrac{(B_1)_{i,n_{\Omega}+2}}{(B_1)_{n_{\Omega}+2,n_{\Omega}+2}}\alpha_{n_{\Omega}+2} \right),
    &  \hspace{-0.1cm}\text{if } 1 \leq i \leq n_{\Omega}, \\
    \dfrac{\alpha_i}{(B_1)_{ii}},
    &  \hspace{-0.1cm}\text{otherwise.}
    \end{cases}
\end{equation*}

Secondly, we consider how to compute $\overline{K}_{\phi}^{-1} \alpha$ when a vector $\alpha \in \mathbb{R}^{n_{\Omega}+2}$ is given.
In most applications, the bilinear form $a(\cdot, \cdot)$ is defined in terms of differential operators as in~\eqref{Neumann}.
This implies that the matrix $\overline{K}_{\phi}$ is tridiagonal, so that $\overline{K}_{\phi}^{-1} \alpha$ can be obtained by the Thomas algorithm~(see, e.g.,~\cite[Section~9.6]{Higham:2002}), which requires only $\mathcal{O}(n)$ elementary arithmetic operations. 

Finally, we discuss how to compute $(R \overline{P}^{-1} R^{\mathrm{T}})^{-1} \alpha$ for $\alpha \in \mathbb{R}^n$. Similarly, one can compute it by solving a linear system
\begin{equation*}
R \overline{P}^{-1} R^{\mathrm{T}} \beta = \alpha.
\end{equation*}
By the definition~\eqref{R} of $R$, we have
\begin{equation*}
    \overline{P}^{-1}R^{\mathrm{T}} \beta = \begin{bmatrix} \alpha_{1:n_{\Omega}} \\ \gamma \end{bmatrix}
\end{equation*}
for some $\gamma \in \mathbb{R}^2$, where $1:n_{\Omega}$ means ``from the first to $n_{\Omega}$th entries."
One can readily deduce that $\gamma$ is determined by the following two relations:
\begin{subequations}
\label{gamma}
\begin{align}
\label{gamma1}
\widetilde{R} \gamma = \alpha_{( n_{\Omega} + 1 ) : n }, \\
\label{gamma2}
\left( \overline{P} \begin{bmatrix} \alpha_{1:n_{\Omega}} \\ \gamma \end{bmatrix} \right)_{(n_{\Omega}+1):(n_{\Omega}+2)} \in \operatorname{ran} \widetilde{R}^{\mathrm{T}}.
\end{align}
\end{subequations}
Since $\widetilde{R} \in \mathbb{R}^{(n - n_{\Omega} ) \times 2}$,~\eqref{gamma1} and~\eqref{gamma2} are written as $n - n_{\Omega}$ and $2 - (n - n_{\Omega})$ linear equations, respectively.
That is, $\gamma$ can be determined by solving the system~\eqref{gamma} of two linear equations.

In what follows, we deal with how to solve the linear system~\eqref{gamma} algebraically in detail.
We write $\gamma = [\gamma_1, \gamma_2]^{\mathrm{T}} \in \mathbb{R}^2$.
By~\eqref{nodes2}, the number of interior nodal points $n_{\Omega}$ is either $n-2$, $n-1$, or $n$.
First, we consider the case $n_{\Omega} = n-2$.
In this case,~\eqref{gamma2} obviously holds and~\eqref{gamma1} reads as
\begin{subequations}
\label{gamma1_detail}
\begin{align}
    \label{gamma1_detail1}
    \psi_{n_{\Omega} + 1}(1) \gamma_1 + \psi_{n_{\Omega} + 1}(0) \gamma_2 &= \alpha_{n_{\Omega} + 1}, \\
    \label{gamma1_detail2}
    \psi_{n_{\Omega} + 2}(1) \gamma_1 + \psi_{n_{\Omega} + 2}(0) \gamma_2 &= \alpha_{n_{\Omega} + 2}.
\end{align}
\end{subequations}
Hence, one can find $\gamma$ by solving~\eqref{gamma1_detail} directly.

Next, we assume that $n_{\Omega} = n-1$.
Then~\eqref{gamma1} and~\eqref{gamma2} read as~\eqref{gamma1_detail1} and
\begin{equation}
    \label{gamma2_detail}
    \overline{P}_{(n_{\Omega} + 1):(n_{\Omega} + 2), 1:n_{\Omega}} \alpha_{1:n_{\Omega}} + \overline{P}_{(n_{\Omega} + 1):(n_{\Omega} + 2), (n_{\Omega}+1):(n_{\Omega}+2)} \gamma
    \in \operatorname{span} \left\{
    \begin{bmatrix}
    \psi_{n_{\Omega} + 1} (1) \\ \psi_{n_{\Omega} + 1} (0)
    \end{bmatrix}
    \right\},
\end{equation}
respectively.
Since~\eqref{gamma2_detail} is equivalent to
\begin{multline}
    \label{gamma2_detail_re}
    \left( \overline{P}_{n_{\Omega} + 1, 1:n_{\Omega}} \alpha_{1:n_{\Omega}} + \overline{P}_{n_{\Omega} + 1, (n_{\Omega}+1):(n_{\Omega}+2)} \gamma \right) \psi_{n_{\Omega} + 1}(0) \\
    - \left( \overline{P}_{n_{\Omega} + 2, 1:n_{\Omega}} \alpha_{1:n_{\Omega}} + \overline{P}_{n_{\Omega} + 2, (n_{\Omega}+1):(n_{\Omega}+2)} \gamma \right) \psi_{n_{\Omega} + 1} (1) = 0,
\end{multline}
$\gamma$ can be obtained by solving the linear system consisting of~\eqref{gamma1_detail1} and~\eqref{gamma2_detail_re}.

Finally, we examine the case $n_{\Omega} = n$.
In this case,~\eqref{gamma1} is void and~\eqref{gamma2} reads as
\begin{align*}
\overline{P}_{(n_{\Omega} + 1):(n_{\Omega} + 2), 1:n_{\Omega}} \alpha_{1:n_{\Omega}} + \overline{P}_{(n_{\Omega} + 1):(n_{\Omega} + 2), (n_{\Omega}+1):(n_{\Omega}+2)} \gamma &= 0.
\end{align*}
Therefore, $\gamma$ is found by solving the above linear system.

Now, $\beta$ is then given by
\begin{equation*}
\begin{split}
    \beta_{1:n_{\Omega}} &= \left( \overline{P} \begin{bmatrix} \alpha_{1:n_{\Omega}} \\ \gamma \end{bmatrix} \right)_{1:n_{\Omega}}, \\
    \widetilde{R}^{\mathrm{T}} \beta_{(n_{\Omega}+1):n} &= \left( \overline{P} \begin{bmatrix} \alpha_{1:n_{\Omega}} \\ \gamma \end{bmatrix} \right)_{(n_{\Omega}+1):(n_{\Omega}+2)}.
\end{split}
\end{equation*}
In summary, the optimal preconditioner requires only $\mathcal{O}(n)$ elementary arithmetic operations.

\section{Numerical results for the ablation studies}
\label{App:AblationStudies}
In this appendix, we discuss the numerical results of the ablation studies for the proposed NPSC method.

\subsection{Effect of preconditioning}
\label{App:preconditioning}
\begin{table}
    \centering
    \begin{tabular}{c|c|c|c|c}
      $n$ & GD & Adam &  CG & PCG\\
          \hline 
       $2^4$  & $>500000$  & 105692   & 34 & 2 \\
    $2^5$     & $>500000$  & $>500000$  & 107 & 2 \\
    $2^6$     & $>500000$  & $>500000$  & 310 & 2 \\
    $2^7$     & $>500000$  & $>500000$  & 1018 & 3 \\
    $2^8$     & $>500000$  & $>500000$ & 3470 & 3
    \end{tabular}
    \caption{Number of iterations required for the gradient descent method~(GD), Adam, conjugate gradient method~(CG), and preconditioned conjugate gradient method~(PCG) to solve the $a$-minimization problems~\eqref{a_min} corresponding to the $L^2$-function approximation problem~\eqref{Ex2} with different numbers of neurons $n$.} 
    \label{Table:preconditioner1}
\end{table}

\begin{table}
    \centering
    \begin{tabular}{c|c|c|c|c}
      $n$ & GD & Adam &  CG & PCG\\
          \hline 
       $2^4$  & 5705  &1430  & 19 & 3 \\
    $2^5$    & 22678  &2847  & 37 & 2 \\
    $2^6$    &90312  & 5989 & 59 & 4 \\
    $2^7$    &360335  &17233 & 103 & 4  \\
    $2^8$    & $>500000$ & 55314 & 173 & 5 
    \end{tabular}
    \caption{Number iterations required for the gradient descent method~(GD), Adam, conjugate gradient method~(CG), and preconditioned conjugate gradient method~(PCG) to solve the $a$-minimization problems~\eqref{a_min} corresponding to the elliptic PDE~\eqref{Ex3} with different numbers of neurons $n$.} 
    \label{Table:preconditioner2}
\end{table}

We present some numerical results to highlight the computational efficiency of the preconditioner $P$.
The $a$-minimization problems~\eqref{a_min_linear} appearing in training of neural networks that solve the $L^2$-function approximation problem~\eqref{Ex2} and the elliptic boundary value problem~\eqref{Ex3} are considered.
We fix the parameters $\omega$ and $b$ in the nonlinear layer as follows:
\begin{equation*}
\omega_i = 1,\quad b_i = -\frac{i}{n+1}, \quad 1\leq i\leq n.
\end{equation*}
We compare the numerical performance of GD, Adam, the preconditioned and unpreconditioned conjugate gradient methods solving~\eqref{a_min_linear} in \cref{Table:preconditioner1} and \cref{Table:preconditioner2} with the stopping criterion~\eqref{PCG_stop} for different numbers of neurons.
While the number of unpreconditioned iterations increases rapidly as $n$ increases, the number of preconditioned iterations is uniformly bounded with respect to $n$.
This showcases the effectiveness of our preconditioned presented in \cref{Thm:preconditioner}. 
Since the computational cost of $P$ is cheap, the proposed preconditioner $P$ is numerically efficient.

\subsection{Effect of backtracking for conventional training algorithms}
We present numerical results that show that the backtracking scheme presented in \cref{Alg:backt} is useful not only for the proposed NPSC but also for conventional training algorithms such as GD, Adam~\cite{KB:2015}, and LSGD~\cite{CGPPT:2020}.

\begin{figure}
  \centering
  \includegraphics[width=0.8\hsize]{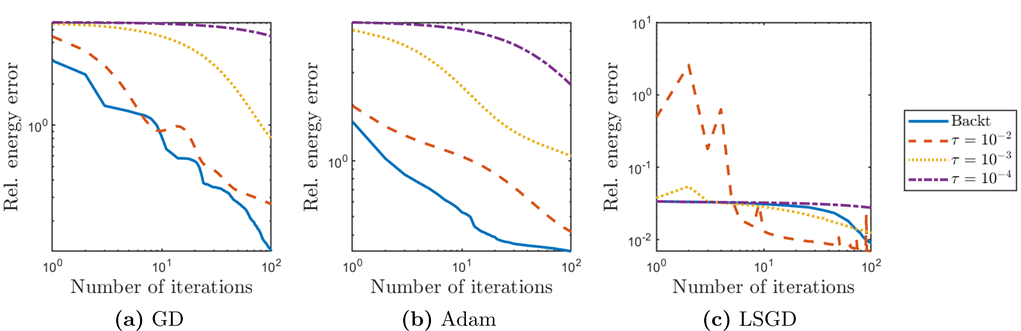}
  \caption{Decay of the relative energy error $\frac{E(\theta^{(k)}) - E^*}{|E^*|}$ in various training algorithms for solving~\eqref{Ex1}.
  ``Backt'' denotes the backtracking scheme presented in \cref{Alg:backt}, and $\tau$ denotes the fixed learning rate.}
  \label{Fig:backt}
\end{figure}

\cref{Fig:backt} plots the relative energy error $\frac{E(\theta^{(k)}) - E^*}{|E^*|}$ of GD, Adam, and LSGD for solving the problem~\eqref{Ex1}, averaged over 10 random initializations, where $k$ denotes the number of epochs and $E^*$ is the energy corresponding to the exact solution of the problem.
The number of neurons used is $2^5$; while we can observe similar results for the other numbers of neurons, we only provide the result of $2^5$ neurons for brevity.
We observe that the algorithms equipped with the backtracking scheme outperform those with constant learning rates $\tau = 10^{-2}$, $10^{-3}$, and $10^{-4}$ in the sense of the convergence rate.
That is, \cref{Alg:backt} seems to successfully find a good learning rate at each iteration of conventional training algorithms as well.
Hence, in \cref{Sec:Numerical}, we employ \cref{Alg:backt} to find learning rates of GD, Adam, and LSGD.

\subsection{Effect of parameter adjustment and Levenberg-Marquardt algorithm}\label{App:param-and-LMA}

\cref{Fig:ablation} depicts numerical comparisons among three algorithms: NPSC, NPSC without \cref{Alg:adj} (parameter adjustment), and NPSC without the Levenberg--Marquardt algorithm.

One can see the variants of NPSC achieve slower convergence rates than NPSC in all the examples. Therefore, both \cref{Alg:adj} and the Levenberg--Marquardt algorithm contribute to the fast convergence of NPSC.
We also note that \cref{Alg:adj} helps NPSC to avoid unstable convergence behaviors like \cref{Fig:ablation}(c, d).

\begin{figure}
  \centering
  \includegraphics[width=\hsize]{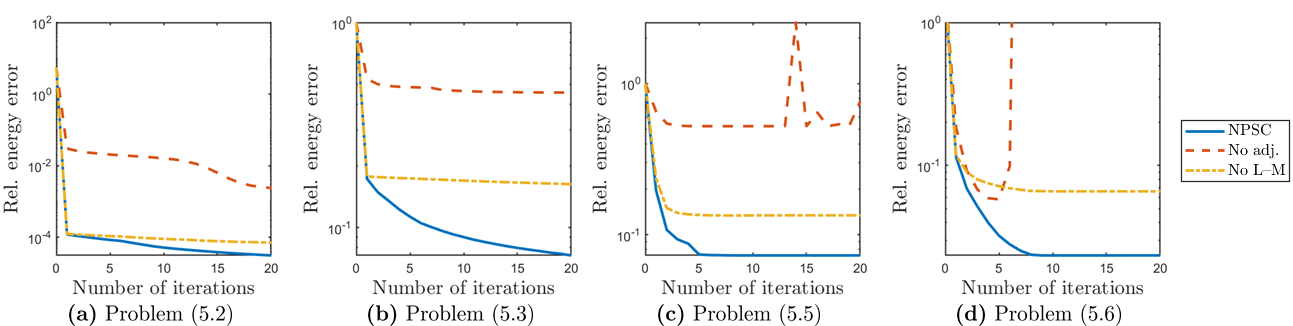}
  \caption{Ablation studies for NPSC on the relative energy error $\frac{E(\theta^{(k)}) - E^*}{|E^*|}$. ``No adj." and ``No L--M'' denote NPSC without the adjustment step and the Levenberg--Marquardt algorithm, respectively.}
  \label{Fig:ablation}
\end{figure}


\section{Numerical integration}
\label{App:Integration}
This appendix is devoted to numerical integration schemes for computing the integral in our model problem~\eqref{model}.
If $d=1$, i.e., if the domain $\Omega = (0, 1) \subset \mathbb{R}$, then we approximate the integral of a function $g(x)$ defined on $\Omega$ by the following simple trapezoidal rule:
\begin{equation}
\label{int_1D}
    \int_{\Omega} g(x) \,dx \approx \sum_{j=1}^{N-1} \frac{g(x_j) + g(x_{j+1})}{2} (x_{j+1} - x_j),
\end{equation}
where $0 = x_1 < x_2 < \dots < x_N = 1$ are $N$ uniform sampling points between $0$ and $1$, i.e., $x_j = \frac{j-1}{N-1}$, $1 \leq j \leq N$.
Approximation properties of the trapezoidal rule~\eqref{int_1D} can be found in standard textbooks on numerical analysis; see, e.g.,~\cite{BFB:2015}.

When $d \geq 2$, we adopt the quasi-Monte Carlo method~\cite{Caflisch:1998} based on Halton sequences~\cite{KW:1997}, which is known to overcome the curse of dimensionality in the sense that approximation error bounds independent of the dimension $d$ are available.
In the quasi-Monte Carlo method, the integral of a function $g(x)$ defined on $\Omega$ is approximated by the average of the function evaluated at $N$ sampling points:
\begin{equation}
\label{int_dD}
    \int_{\Omega} g(x) \,dx \approx \frac{1}{N} \sum_{j=1}^N g(x_j),
\end{equation}
where $\{ x_j \}_{j=1}^N$ is a low-discrepancy sequence in $\Omega$ defined in terms of Halton sequences.
For the sake of description, we assume that $\Omega = (0,1)^d \in \mathbb{R}^d$.
Then the $k$th coordinate of $x_j$~($1 \leq j \leq N$, $1 \leq k \leq d$) is the number $j$ written in $p_k$-ary representation, inverted, and written after the decimal point, where $p_k$ is the $k$th smallest prime number.
For example, if $d = 3$, then the first four points of $\{ x_j \}_{j=1}^N$ is given by
\begin{equation*}
    x_1 = \left( \frac{1}{2}, \frac{1}{3}, \frac{1}{5} \right), \hspace{0.1cm}
    x_2 = \left( \frac{1}{2^2}, \frac{2}{3}, \frac{2}{5} \right), \hspace{0.1cm}
    x_3 = \left( \frac{3}{2^2}, \frac{1}{3^2}, \frac{3}{5} \right), \hspace{0.1cm}
    x_4 = \left( \frac{1}{2^3}, \frac{4}{3^2}, \frac{4}{5} \right).
\end{equation*}
Approximation properties of~\eqref{int_dD} can be found in~\cite{Caflisch:1998}.

\bibliographystyle{siamplain}
\bibliography{refs_NPSC}
\end{document}